\documentclass{article}

\usepackage[backend=bibtex, 
sortcites=true,
firstinits=true,
hyperref,
maxbibnames=99,
]{biblatex}

\bibliography{Bibliography}{}



\usepackage{enumerate}

\usepackage[centertags]{amsmath}
\usepackage{amsfonts}
\usepackage{amssymb}
\usepackage{amsthm}
\usepackage{newlfont}
\usepackage{mathtools}
\usepackage[top=1in, bottom=1in, left=1in, right=1in]{geometry}
\usepackage{tikz}

%

%
\theoremstyle{plain}
\newtheorem{theorem}{Theorem}[section]

\newtheorem{corollary}[theorem]{Corollary}
\newtheorem{lemma}[theorem]{Lemma}

\theoremstyle{definition}

\newtheorem{example}{Example}[section]
\newtheorem*{remark}{Remark}

\newcommand{\R}{\mathbb{R}}

\newcommand{\eps}{\varepsilon}

\newcommand{\Hidden}[1]{}

\DeclareMathOperator{\vol}{vol}

\begin{document}

\title{Li-Yau inequality under $CD(0,n)$ on graphs}
\author{
Florentin M\"unch\footnote{MPI MiS Leipzig, muench@mis.mpg.de}
}
\date{\today}
\maketitle

\begin{abstract}
We introduce a modified non-linear heat equation $\partial_t u = \Delta u + \Gamma u$ as a substitute of $\log P_t f$ where $P_t$ is the heat semigroup.
We prove an exponential decay of $\Gamma u$ under the Bakry Emery curvature condition $CD(K,\infty)$ and prove the Li-Yau inequality $-\Delta u_t \leq \frac{n}{2t}$ under the Bakry Emery curvature condition $CD(0,n)$.
From this, we deduce the volume doubling property which solves a major open problem in discrete Ricci curvature. As an application, we show that there exist no expander graphs satisfying $CD(0,n)$.
\end{abstract}

\tableofcontents

\section{Introduction}

The celebrated Li-Yau inequality  on $n$-dimensional Riemannian manifolds with non-negative Ricci curvature states
\[
-\Delta \log P_t f \leq \frac{n}{2t}
\]
for all positive $f$ where $P_t = e^{\Delta t}$ denotes the heat semigroup \cite{li1986parabolic}.

Tremendous effort has been invested to establish a Li-Yau inequality on graphs with non-negative Ricci curvature \cite{bauer2015li,horn2014volume,
munch2014li,dier2017discrete,gong2018li}. 
To this end, there have been introduced several non-linear modifications of the Bakry-Emery curvature. These modified curvature notions are hard to compute or estimate in practice. Additionally, non-negative curvature in the modified sense is strictly stronger than non-negative curvature in the classical Bakry Emery sense \cite{munch2017remarks,munch2014li}.
Moreover, computing Bakry curvature is computationally simple since it reduces to a semidefinite programming problem \cite{cushing2016bakry}.

Naturally, the question arises, if the Li-Yau inequality and its consequences are also true under non-negative Bakry Emery curvature.
Due to the lack of a chain rule for the Laplacian in discrete settings, this turned out to be a hard question and there was no significant progress in this direction for more than five years.
There even seemed to be a consensus that strong curvature assumptions are necessary for proving strong analytical and geometrical results as Li-Yau inequality, Harnack inequality, volume doubling or Gaussian heat kernel estimates, as there is a growing number of articles investigating the strong curvature notions
\cite{bauer2015li,horn2014volume,
horn2019spacial,dier2017discrete,
munch2017remarks,munch2014li,
gong2017equivalent,
gong2018li,fathi2015curvature,
lin2015equivalent,lin2017existence,
hua2016curvature,gao2016curvature,gao2016one,
Lv2019,lippner2016li,Wang2017,qian2017remarks,
wu2018nonexistence,lin2019ultracontractivity,
wang2018eigenvalue,lin2017log}.

In this paper, we establish a Li-Yau inequality by modifying the heat equation instead of the curvature condition.
The motivation is that the discrete Bakry Emery curvature seems not to be compatible with the logarithm, so we aim to replace $\log P_t f$ by $u_t$ which satisfies a suitable differential equation.
Assuming a chain rule for the Laplacian, one has
\begin{align}\label{eq:dtu}
\partial u_t = \partial_t \log P_t f=\frac{\Delta P_t f}{P_t f} = \frac{\Delta e^{u_t}}{e^{u_t}} = \Delta u_t + \Gamma u_t
\end{align}
where $\Gamma u_t = |\nabla u_t|^2$.
The main idea in this paper is to take \eqref{eq:dtu} as a definition in the discrete setting and use it to approximate $\log P_t f$.

By Picard-Lindel\"of, we have uniqueness and short-time existence of the initial value problem.
Long-time existence cannot be guaranteed in general since the non-linear $\Gamma$ term can make the differential equation explode.
However, we will prove long time existence via a gradient estimate under non-negative Bakry Emery curvature (see Theorem~\ref{thm:GradEst}).
Having established Li-Yau inequality (see Theorem~\ref{thm:LiYau}), we deduce a parabolic Harnack inequality (see Theorem~\ref{thm:Harnack}) which will be used to prove volume doubling (see Theorem~\ref{thm:volDoubling}). By this, we will show that there are no expander graphs satisfying the Bakry-Emery curvature condition $CD(0,n)$ with finite dimension $n$ (see Corollary~\ref{cor:Expanders}).

Let us briefly recall some known consequences of lower bounds on the Bakry Emery curvature.
Eigenvalue estimates in terms of the curvature and diameter are given in \cite{chung2017curvature,lin2010ricci}.
Buser's inequality has been proven in \cite{liu2014eigenvalue,liu2018buser,
liu2015curvature,klartag2015discrete}.
Gaussian concentration has been proven in \cite{schmuckenschlager1998curvature,
fathi2015curvature}.
Optimal diamter bounds under positive curvature and their rigidity have been investigated in \cite{liu2017rigidity,liu2016bakry,
fathi2015curvature}.
Graphs with non-constant lower bounds on the Bakry Emery curvature have been investigated in \cite{munch2018perpetual,liu2017distance}.

\subsection{Setup and Notation}
In this paper, we restrict ourselves to finite graphs. 
A finite graph $G=(V,q)$ consists of a finite set $V$ and a function $q:V^2 \to [0,\infty)$ with $q(x,x)= 0$ for all $x \in V$.
The function $q$ corresponds to the jump rates of the corresponding continuous time Markov chain.
A graph is called reversible or undirected if there exists a function $m: V\to (0,\infty)$ s.t.
\[
q(x,y)m(x)=q(y,x)m(y).
\]
The function $m$ is called reversible measure where we use the terminology of Markov chains.
We  write $x \sim y$ if $q(x,y)>0$ or $q(y,x)>0$.
We remark that this definition is slightly non-standard in the non-reversible case as it might happen that $q(x,y)>0=q(y,x)$.
We define the minimum jump rate
\[
q_{\min}:=\min \{ q(x,y): x,y \in V, \quad q(x,y)>0 \}.
\]
Indeed, most of the results of this article do not require reversibility.
We denote the maximum vertex degree by
\[
D:=\max_x \sum_y q(x,y).
\]
We write $C(V) = \R^V$.
The negative semidefinite Laplacian $\Delta: C(V) \to C(V)$ is given by
\[
\Delta f(x):=\sum_{y} q(x,y)(f(y)-f(x)).
\]
The Laplacian generates the heat semigroup $P_t=e^{t\Delta}$ giving the unique solution to the heat equation $\partial_t P_t f = \Delta P_t f$ and $P_0 f = f$.
The combinatorial graph distance for $x,y \in V$ is given by
\[
d(x,y):= \inf\{n: x=x_0 \sim \ldots \sim x_n=y\}.
\]
We write $B_r(x):=\{y \in V: d(x,y) \leq r\}$.

For reversible graphs with reversible measure $m$, we can define the $\ell_p$ norms.
For $f \in C(V)$, the $\ell_p$ norm w.r.t. $m$ is given by $\|f\|_p := \left(\sum_x |f(x)|^p m(x)\right)^{1/p}$ for $p \in [1,\infty)$ and $\|f\|_\infty := \sup_{x \in V} |f(x)|$. The $\ell_\infty$-norm is also well defined in the non-reversible case.

We recall the Bakry Emery calculus introduced in \cite{BakryEmery85} and first adopted to discrete settings in \cite{schmuckenschlager1998curvature,
lin2010ricci}.
The operator $\Gamma_k : C(V) \times C(V) \to C(V)$ is inductively given by $\Gamma_0(f,g) := fg$ and
\[
2\Gamma_{k+1}(f,g) := \Delta \Gamma_k(f,g) - \Gamma_k(f,\Delta g) - \Gamma_k(\Delta f,g).
\]
We write $\Gamma_k f := \Gamma_k(f,f)$ and $\Gamma := \Gamma_1$.
A graph is said to satisfy the curvature dimension inequality $CD(K,n)$ if for all $f \in C(V)$,
\[
\Gamma_2 f \geq \frac 1 n (\Delta f)^2 + K \Gamma f.
\]
The parameter $K$ can be seen as a lower Ricci curvature bound and $n$ as a local upper dimension bound.

In the paper, we will assume throughout that $(u_t)_{t \in [0,T)} \in C(V)$ is a solution to
\[
\partial_t u_t = \Delta u_t + \Gamma u_t.
\]
Since this equation has a unique solution by Picard Lindel\"of, we use the semigroup notation and write
\[
L_t f := u_t
\]
when assuming $u_0 = f$.

\subsection{Basics on ordinary differential equations}

As we will employ a modified heat equation, we recall some basic facts about autonomous ordinary differential equations for convenience of the reader. This review is mainly based on \cite{grigor2007ODE}.
Let $(U_t)_{t \in \R} \in \R^n$. Let $A:\R^n \to \R^n$ be locally Lipschitz, i.e., there exists $c(R)$ s.t. for all $x,y \in \R^n$, one has $\|A(x)-A(y)\|_2 \leq c(R) \|x-y\|_2$ where $R = \max(\|x\|_2,\|y\|_2)$. Let $u_0 \in \R^n$.
Then by the Picard-Lindel\"of theorem, there exists an interval $(a,b)$ with $0 \in (a,b)$ and a solution to $U_0=u_0$ and
$\partial_t U_t = A(U_t)$ for $t \in (a,b)$ such that every other solution $\widetilde U_t$ on an interval $I$ which contains zero, satisfies $\widetilde U_t = U_t$ on $I \subset (a,b)$.
Moreover, the unique solution $U_t$ continuously depends  on $u_0$ for fixed $t$. Furthermore if $b < \infty$, then $U_t$ leaves every compact set, i.e., the set $\{U_t:t \in [0,b)\}$ is unbounded.

\section{A modified heat equation}

In this section, we give fundamental properties of the modified heat equation. Particularly, we prove gradient estimates, long times existence, monotonicity, and we establish a precise comparison to the linear heat semigroup $P_t$.

\subsection{Gradient estimate}

The following theorem is vital to ensure long time existence of $u_t$. 
It states that the gradient of $u_t$ decays with rate $K$ assuming $CD(K,\infty)$.
We will assume $K\geq 0$ since otherwise we cannot ensure long time existence of $u_t$.

\begin{theorem}[Gradient estimate]\label{thm:GradEst}
Let $G=(V,q)$ be a finite graph satisfying 
$CD(K,\infty)$ for some $K\geq 0$.
Suppose $\|\Gamma u_0\|_\infty \leq q_{\min}/2$. 
Then for all $t\geq 0$, the solution $u_t$ exists and satisfies
\[
\|\Gamma u_t\|_\infty \leq e^{-2Kt} \|\Gamma u_0\|_\infty.
\]
In particular,
$|u_t(y)-u_t(x)| \leq 1$ for all $x\sim y$ and all $t\geq 0$.
\end{theorem}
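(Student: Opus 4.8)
The plan is to track the scalar quantity $M(t) := \|\Gamma u_t\|_\infty = \max_{x} \Gamma u_t(x)$ (using $\Gamma u_t \ge 0$) and show it obeys a differential inequality forcing the claimed exponential decay. First I would differentiate $\Gamma u_t$ along the flow. By bilinearity of $\Gamma$ and $\partial_t u_t = \Delta u_t + \Gamma u_t$ one gets $\partial_t \Gamma u_t = 2\Gamma(u_t, \Delta u_t) + 2\Gamma(u_t, \Gamma u_t)$, and substituting the defining identity $2\Gamma(u,\Delta u) = \Delta \Gamma u - 2\Gamma_2 u$ yields
\[
\partial_t \Gamma u_t = \Delta \Gamma u_t - 2\Gamma_2 u_t + 2\Gamma(u_t, \Gamma u_t).
\]
Invoking $CD(K,\infty)$ in the form $\Gamma_2 u_t \ge K \Gamma u_t$ gives the pointwise inequality $\partial_t \Gamma u_t \le \Delta \Gamma u_t - 2K \Gamma u_t + 2\Gamma(u_t, \Gamma u_t)$.

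Next I would apply a maximum principle. Writing $g_t = \Gamma u_t$ and picking any vertex $x^*$ where $g_t$ attains its maximum $M(t)$, the two terms other than $-2Kg_t$ combine into
\[
\Delta g_t(x^*) + 2\Gamma(u_t, g_t)(x^*) = \sum_y q(x^*,y)\,(g_t(y) - g_t(x^*))\,\bigl(1 + (u_t(y) - u_t(x^*))\bigr).
\]
Since $g_t(y) - g_t(x^*) \le 0$ at a maximizer, this is $\le 0$ as soon as the factor $1 + (u_t(y)-u_t(x^*))$ is non-negative for every $y$ with $q(x^*,y) > 0$. This is exactly where $M(t) \le q_{\min}/2$ enters: for such $y$ one has $q(x^*,y) \ge q_{\min}$, so $M(t) = \Gamma u_t(x^*) \ge \tfrac12 q(x^*,y)(u_t(y)-u_t(x^*))^2 \ge \tfrac12 q_{\min}(u_t(y)-u_t(x^*))^2$, which together with $M(t)\le q_{\min}/2$ forces $|u_t(y)-u_t(x^*)|\le 1$, hence $1 + (u_t(y)-u_t(x^*)) \ge 0$. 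Consequently $\partial_t g_t(x^*) \le -2K M(t)$ at every maximizer, so the upper Dini derivative satisfies $D^+ M(t) \le -2K M(t)$ whenever $M(t) \le q_{\min}/2$. Pairing the genuinely discrete cross term $2\Gamma(u_t,\Gamma u_t)$ against the Laplacian so that this sign works out is the crux of the argument, and the threshold $q_{\min}/2$ is calibrated precisely so that smallness of $\Gamma u$ bounds edge differences by $1$.

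It then remains to close the loop, since the differential inequality is only valid while $M \le q_{\min}/2$, yet it is what keeps $M$ below that threshold. Because $M(0) \le q_{\min}/2$ and $K \ge 0$, a continuity/bootstrap argument on the maximal existence interval $[0,b)$ shows that $\{t : M(s)\le q_{\min}/2 \text{ for all } s\le t\}$ is relatively open and closed in $[0,b)$, hence equals $[0,b)$; Gronwall's lemma then gives $M(t) \le e^{-2Kt} M(0)$ throughout.

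Finally, for long-time existence I would use the blow-up criterion from the ODE preliminaries: a finite maximal time $b$ forces $u_t$ to leave every compact set. But at a maximizer $x^*$ of $u_t$ the Laplacian is $\le 0$, so $\partial_t u_t(x^*) = \Delta u_t(x^*) + \Gamma u_t(x^*) \le q_{\min}/2$, whence $\max_x u_t$ grows at most linearly; at a minimizer the Laplacian is $\ge 0$ and $\Gamma u_t \ge 0$, so $\min_x u_t$ is non-decreasing. Thus $\min_x u_0 \le u_t(x) \le \max_x u_0 + (q_{\min}/2)t$ keeps $u_t$ bounded on every finite interval, ruling out blow-up and giving $b = \infty$. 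The concluding bound $|u_t(y)-u_t(x)|\le 1$ for $x \sim y$ is then immediate from $\|\Gamma u_t\|_\infty \le q_{\min}/2$ by the same computation used above, applied at whichever of $x,y$ has positive outgoing rate along the edge.
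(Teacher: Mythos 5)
Your core argument coincides with the paper's: the identity $\partial_t \Gamma u_t = \Delta\Gamma u_t - 2\Gamma_2 u_t + 2\Gamma(u_t,\Gamma u_t)$, the use of $CD(K,\infty)$ to discard $\Gamma_2 u_t$ in favor of $-2K\Gamma u_t$, and the key cancellation at a maximizer $x^*$,
\[
\Delta \Gamma u_t(x^*) + 2\Gamma(u_t,\Gamma u_t)(x^*) = \sum_y q(x^*,y)\bigl(\Gamma u_t(y)-\Gamma u_t(x^*)\bigr)\bigl(1+u_t(y)-u_t(x^*)\bigr) \le 0,
\]
with $q_{\min}/2$ calibrated exactly so that $1+u_t(y)-u_t(x^*)\ge 0$, is the paper's computation verbatim; your long-time existence step (a priori boundedness rules out the Picard--Lindel\"of escape criterion) also matches the paper in spirit, and is even slightly cleaner. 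The only structural difference is how the differential inequality is closed: the paper argues by contradiction with an $\varepsilon t$ slack after first reducing to strict inequality $\|\Gamma u_0\|_\infty < q_{\min}/2$, whereas you use a Dini-derivative Gronwall argument plus an ``relatively open and closed'' bootstrap.

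That bootstrap step has a genuine gap. Your inequality $D^+M(t)\le -2KM(t)$ is only available \emph{while} $M(t)\le q_{\min}/2$, and from this information alone the set $A=\{t: M(s)\le q_{\min}/2 \mbox{ for all } s\le t\}$ need not be open when $K=0$ and $M$ touches the threshold. Concretely, the implication you invoke --- $M(0)\le c$ and $D^+M\le 0$ wherever $M\le c$ imply $M\le c$ everywhere --- is false at this level of generality: the function $\varphi(t)=c+t^2$ satisfies $\varphi(0)=c$ and $D^+\varphi(t)\le 0$ at every point where $\varphi(t)\le c$ (the only such point is $t=0$, where $D^+\varphi(0)=0$), yet $\varphi(t)>c$ for all $t>0$. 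Since the theorem explicitly allows $\|\Gamma u_0\|_\infty = q_{\min}/2$ and $K=0$ is the case of main interest for the paper, this boundary case cannot be waved away. The repair is precisely the preliminary reduction the paper makes: by continuous dependence of the solution on the initial datum (Picard--Lindel\"of), it suffices to prove the estimate under the strict assumption $\|\Gamma u_0\|_\infty < q_{\min}/2$; there a first-hitting-time argument does close (if $t_1$ were the first time with $M(t_1)=q_{\min}/2$, then $D^+M\le 0$ holds on all of $[0,t_1]$, so $M$ is non-increasing there and $M(t_1)\le M(0)<q_{\min}/2$, a contradiction), and the general case follows by passing to the limit in the resulting estimate. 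With that one addition your proof is complete; note that the paper's $\varepsilon t$ barrier together with its reduction to strict inequality serves exactly this purpose.
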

In case of negative curvature, the same statement can be proven under a stronger assumption on the gradient of $u_0$, however, existence of $u_t$ can only be guaranteed until a fixed time depending on the curvature and the initial gradient bound.
\begin{proof}
We prove the theorem by contradiction.
By the Picard Lindel\"of theorem, one has short time existence and smoothness of the solution since the right side $\Delta + \Gamma$ is smooth.
By continuity of the solution in the initial state $u_0$, we can assume $\|\Gamma u_0\|_\infty < q_{\min}/2$ without loss of generality.
Suppose $\|\Gamma u_t\|_\infty > e^{-2Kt} \|\Gamma u_0\|_\infty$ for some $t >0$. 
Then $\Gamma u_t(x) - \eps t \geq e^{-2Kt} \|\Gamma u_0\|_\infty$ for small $\eps>0$.
Let 
\[
t_0 := \inf\{t>0: \|\Gamma u_t\|_\infty  \geq e^{-2Kt} \|\Gamma u_0\|_\infty + \eps t\}.
\]
Then $t_0 >0$ as $\|\Gamma u_t\|_\infty$ is continuous in $t$ for small $t\geq 0$.
If $\eps$ is small enough, then $\|\Gamma u_{t_0}\|_\infty \leq q_{\min}/2$ since $\|\Gamma u_0\|_\infty < q_{\min}/2$ by assumption.
We write $f:= u_{t_0}$ and observe that 
\[
\inf_{x \colon \Gamma f(x)=\|\Gamma u_{t_0}\|_\infty} \partial_t \Gamma u_{t_0}(x) = \partial_t^-  \|\Gamma u_{t_0}\|_\infty \geq -2K e^{-2Kt} \|\Gamma u_{0}\|_\infty +\eps \geq -2K  \|\Gamma f\|_\infty + \eps
\]
where $\partial_t^-$ is the left derivative and the first inequality follows from $t_0>0$.
On the other hand,
\begin{align*}
\partial_t \Gamma u_{t_0} = 2\Gamma(u_{t_0},\partial_t u_{t_0}) = 2\Gamma(f, \Delta f + \Gamma f) = -2\Gamma_2 f + \Delta \Gamma f + 2 \Gamma(f,\Gamma f) \leq -2K\Gamma f  + \Delta \Gamma f + 2 \Gamma(f,\Gamma f)
\end{align*}
where we used $CD(K,\infty)$ in the last estimate.
If $\Gamma f$ attains its maximum in $x$, then
$-2K \Gamma f(x) = -2K\|\Gamma f\|_\infty$
and
\begin{align*}
\Delta \Gamma f(x) + 2\Gamma(f,\Gamma f)(x) &= \sum_{y\sim x} q(x,y)(\Gamma f(y)-\Gamma f(x)) +\sum_{y\sim x} q(x,y)(\Gamma f(y)-\Gamma f(x))(f(y)-f(x))\\
&=\sum_{y\sim x} q(x,y)(\Gamma f(y)-\Gamma f(x))(1 + f(y)-f(x)).
\end{align*}
Since $\Gamma f$ is maximal in $x$, we have $\Gamma f(y)-\Gamma f(x) \leq 0$ and since $\Gamma f(x) \leq q_{\min}/2$, we have $(f(y)-f(x))^2 \leq 1$ whenever $q(x,y)>0$, implying $1+f(y)-f(x) \geq 0$.
In particular,
\[
\Delta f(x)+ 2 \Gamma(f,\Gamma f)(x) \leq 0.
\]
Putting everything together yields
\[
-2K\|\Gamma f\|_\infty + \eps \leq \partial_t^- \|\Gamma u_{t_0}\|_\infty = \inf_{x \colon \Gamma f(x)=\|\Gamma f\|_\infty} \partial_t \Gamma u_{t_0}(x)  \leq -2K\|\Gamma f\|_\infty.
\]
This is a contradiction proving the desired inequality for all $t$ for which $u_t$ exists.
Finally, long time existence follows since $|\partial_t u_t| \leq D \|u_t\|_\infty + \|\Gamma u_0\|_\infty \leq D \|u_t\|_\infty + q_{\min}/2$ showing that $u_t$ stays within a compact set for all finite $t$. This finishes the proof.
\end{proof}

\begin{remark}
One might be tempted to think that the above gradient estimate characterizes $CD(K,\infty)$.
This however is not true as in the proof, we use maximality of the gradient. Particularly, the graph $G=(\{1,2,3\},q)$ with $q(1,2)=2$ and $q(2,1)=1$ and $q(2,3)=5$ and $q(3,2)=1$ and $q(1,3)=q(3,1)=0$ satisfies $CD(0,\infty)$ with $0$ as optimal curvature bound, but it satisfies the gradient estimate of the theorem with $K=1$ as can be shown in an elementary computation.
\end{remark}

\begin{remark}
For comparison purposes, we recall similar gradient estimates.
\begin{enumerate}[(i)]
\item Lower bounds of the Bakry Emery curvature can be characterized via gradient estimates for the heat semigroup, see \cite{lin2015equivalent,keller2018gradient,
hua2017stochastic,gong2017equivalent}. Particularly, $CD(K,\infty)$ is equivalent to
\[
\Gamma P_t f \leq e^{-2Kt}P_t \Gamma f.
\]
\item
Lower bounds on the Ollivier curvature (see \cite{ollivier2007ricci,ollivier2009ricci}) can also be characterized via a Lipschitz decay of the heat semigroup. Particularly by \cite{munch2017ollivier}, Ollivier curvature bounded from below by $K$ is equivalent to
\[
\|\nabla P_t f \|_\infty \leq e^{-Kt}\|\nabla f \|_\infty.
\]
where $\|\nabla f\|_\infty$ denotes the optimal Lipschitz constant w.r.t the canonical graph distance $d$.
\item
Lower bounds on a modified Ollivier curvature can  be characterized via a Lipschitz decay of the logarithm of the heat semigroup. Particularly by \cite[Theorem~1.8]{kempton2019large}, the exponential Ollivier curvature bounded from below by $K$ is equivalent to
\[
\|\nabla \log P_t f \|_\infty \leq e^{-Kt}\|\nabla \log f \|_\infty.
\]
\item By \cite{erbar2018poincare}, the entropic Ricci curvature bounded from below by $K$ for a reversible graph with reversible measure $m$ is equivalent to
\[
\|\nabla P_t f\|_\rho^2 \leq e^{-2Kt}\|\nabla f\|^2_{P_t \rho}
\]
for all positive $f,\rho \in C(V)$ where
\[
\|\nabla f\|^2_{\rho} := \sum_{y\sim x} q(x,y)m(x)(f(y)-f(x))^2 \frac{\rho(y)-\rho(x)}{\log \rho(y)- \log \rho(x)}
\]
and the latter term is set to be $\rho(x)$ if $\rho(y)=\rho(x)$.
\end{enumerate}

\end{remark}

\subsection{Monotonicity}
We recall that $L_t f$ is the unique maximal solution to
$L_0 f = f$ and
\[
\partial_t L_t f = \Delta L_t f + \Gamma L_t f.
\]
As we know that the heat semigroup $P_t$ is monotonous, we expect the same for $L_t$. This indeed holds true when assuming small enough gradients of the initial values as shown in the following theorem.

\begin{theorem}[Monotonicity]\label{thm:Monotonicity}
Let $G=(V,q)$ be a finite graph satisfying 
$CD(K,\infty)$ for some $K\geq 0$.
Suppose $\|\Gamma f\|_\infty \leq q_{\min}/2$.
Further suppose $g \geq f$. 
Then for all $t\geq 0$ for which $L_t g$ exists,
\[
L_t g \geq L_t f.
\]
\end{theorem}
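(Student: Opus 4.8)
The plan is to run a parabolic comparison argument of Kamke--M\"uller type, adapted to the finite graph, where the gradient bound from Theorem~\ref{thm:GradEst} supplies exactly the quasi-monotonicity that the right-hand side $\Delta + \Gamma$ needs. Write $u_t := L_t f$ and $v_t := L_t g$, and set $h_t := v_t - u_t$. Since $\|\Gamma f\|_\infty \leq q_{\min}/2$, Theorem~\ref{thm:GradEst} gives global existence of $u_t$ together with the crucial bound $|u_t(y)-u_t(x)| \leq 1$, hence $u_t(y)-u_t(x) \geq -1$, for all $x \sim y$ and all $t \geq 0$. The goal is to show $h_t \geq 0$ on the existence interval $[0,T)$ of $v_t$, starting from $h_0 = g - f \geq 0$.

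First I would record the evolution of the difference. Subtracting the two equations and using $(v(y)-v(x))^2 - (u(y)-u(x))^2 = [h(y)-h(x)]\,[(v(y)-v(x))+(u(y)-u(x))]$ to expand $\Gamma v - \Gamma u$, one obtains
\[
\partial_t h(x) = \sum_{y} q(x,y)\,[h(y)-h(x)]\left(1 + \tfrac{1}{2}\big[(v(y)-v(x)) + (u(y)-u(x))\big]\right).
\]
Thus $h$ solves a linear heat equation with time-dependent effective weights $\tilde q(x,y) = q(x,y)\big(1 + \tfrac12[(v(y)-v(x))+(u(y)-u(x))]\big)$, and the entire argument reduces to showing that these weights are nonnegative at the relevant vertices so that a minimum principle applies.

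To make the minimum principle strict I would perturb. Fix $\delta > 0$, set $w_t := h_t + \delta(1+t)$ so that $w_0 > 0$, and suppose for contradiction that $w_t$ fails to be positive somewhere on $[0,T)$. Since $V$ is finite and $w$ is continuous, there is a first time $t_0 \in (0,T)$ and a vertex $x_0$ with $w_{t_0}(x_0) = 0 = \min_x w_{t_0}(x)$; in particular $x_0$ minimizes $h_{t_0}$, so $h_{t_0}(y) - h_{t_0}(x_0) \geq 0$ for every $y$. The key point is the gradient control on $v$ at $x_0$: although we have \emph{no} a priori bound on $\Gamma g$, at a minimum of $h$ we may write $v_{t_0}(y)-v_{t_0}(x_0) = [h_{t_0}(y)-h_{t_0}(x_0)] + [u_{t_0}(y)-u_{t_0}(x_0)] \geq 0 + (-1) = -1$, borrowing the lower bound from $u$. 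Combined with $u_{t_0}(y)-u_{t_0}(x_0) \geq -1$, this forces the bracket $1 + \tfrac12[(v_{t_0}(y)-v_{t_0}(x_0))+(u_{t_0}(y)-u_{t_0}(x_0))] \geq 0$, so every summand in the displayed formula for $\partial_t h(x_0)$ is a product of the nonnegative factor $h(y)-h(x_0)$ with a nonnegative bracket. Hence $\partial_t h_{t_0}(x_0) \geq 0$ and therefore $\partial_t w_{t_0}(x_0) \geq \delta > 0$.

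This is the desired contradiction: as $\min_x w_t > 0$ for $t < t_0$ while $w_{t_0}(x_0) = 0$, the map $t \mapsto w_t(x_0)$ reaches $0$ from strictly positive values, so its left derivative at $t_0$ is $\leq 0$, incompatible with $\partial_t w_{t_0}(x_0) > 0$. Thus $w_t > 0$ on $[0,T)$ for every $\delta > 0$, and letting $\delta \to 0$ gives $h_t \geq 0$, i.e. $L_t g \geq L_t f$. I expect the main obstacle to be precisely the absence of a gradient bound on $g$; the argument sidesteps it by the observation that quasi-monotonicity of $\Delta + \Gamma$ is only needed at a spatial minimum of $h$, where the lower gradient bound for $v$ is inherited for free from that of $u$. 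The only remaining care is routine: checking that the first touching time $t_0$ is well defined and lies in $(0,T)$, and justifying the one-sided differentiability used in the final step.
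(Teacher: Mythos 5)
Your proposal is correct and takes essentially the same route as the paper's proof: a first-touching-time contradiction with a small linear-in-$t$ perturbation, where at the touching point minimality of the difference plus the gradient bound $u_{t_0}(y)-u_{t_0}(x_0)\geq -1$ from Theorem~\ref{thm:GradEst} (transferred to $L_t g$ exactly as you do, since no bound on $\Gamma g$ is available) forces the time derivative of the difference to be nonnegative there. Your factorization of $\Gamma v-\Gamma u$ into terms $[h(y)-h(x)]\left(1+\tfrac12[(v(y)-v(x))+(u(y)-u(x))]\right)$ is the same algebraic fact that the paper phrases as termwise monotonicity of $s\mapsto s(1+s/2)$ on $[-1,\infty)$.
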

\begin{remark}
The condition $\|\Gamma f\|_\infty \leq q_{\min}/2$ is necessary since $L_t f$ can explode otherwise and $L_t g$ can stay constant.
\end{remark}
\begin{proof}[Proof of Theorem~\ref{thm:Monotonicity}]
Suppose $L_t g(x) <L_t f(x)$ for some $x$ and $t$. Let $\eps>0$ be small enough s.t.
\[
t_0 := \inf\{t:L_t g(x) +\eps t < L_t f(x) \mbox{ for some }x\}
\]
is finite.
Let $x_0$ s.t. $L_t g(x_0) +\eps t = L_t f(x_0)$.
We observe 
$$
f_y:=L_{t_0} f(y) - L_{t_0}f(x_0) \leq L_{t_0} g(y) - L_{t_0}g(x_0) =:g_y.
$$
Thus,
\[
\sum_{y}q(x_0,y) f_y\left(1+\frac{f_y}2  \right)=
\partial_t L_t f(x_0)  \geq \eps +\partial_t L_t g(x_0) = \eps \sum_{y}q(x_0,y) g_y\left(1+\frac{g_y}2  \right).
\]
By Theorem~\ref{thm:GradEst}, we have $g_y \geq f_y \geq -1$ for all $y \sim x_0$ and thus,  
\[
f_y\left(1+\frac{f_y}2  \right) \leq g_y\left(1+\frac{g_y}2  \right)
\]
for all $y \sim x_0$. This is a contradiction to the above inequality which finishes the proof.
\end{proof}

The monotonicity gives another justification that the non-linear semigroup $L_t$ is a suitable substitute for $\log P_t$. However, the striking argument for the analogy between $L_t$ and $\log P_t$ is given in the next subsection.

\subsection{Semigroup and $\ell_1$-norm comparison}

We now compare $u_t$ with $\log P_t f$ in a precise way.
Particularly, $e^{u_t}$ can be be upper and lower bounded by
$(P_t e^{\alpha u_0})^{1/\alpha}$
with suitable choices of $\alpha$ as shown in the following theorem. 

\begin{theorem}[Semigroup comparison]\label{thm:SemigroupCompare}
Let $G=(V,q)$ be a finite graph satisfying $CD(0,\infty)$. Suppose $\Gamma u_0 \leq q_{\min}/2$.
Then,
\[
P_t e^{\alpha u_0} \geq e^{\alpha u_t} \qquad \mbox{ for } \alpha \geq 1.60,
\]
and
\[
P_t e^{\alpha u_0} \leq e^{\alpha u_t} \qquad \mbox{ for } \alpha \leq 0.76.
\]
\end{theorem}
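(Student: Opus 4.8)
The plan is to treat $v_t := e^{\alpha u_t}$ as a sub- or supersolution of the \emph{linear} heat equation and then compare it against $w_t := P_t e^{\alpha u_0}$, the genuine solution with the same initial datum $w_0 = v_0$. Since $CD(0,\infty)$ holds and $\Gamma u_0 \le q_{\min}/2$, Theorem~\ref{thm:GradEst} guarantees that $u_t$ exists for all $t \ge 0$ and that $|u_t(y)-u_t(x)| \le 1$ whenever $x \sim y$. This confinement of the discrete gradient to $[-1,1]$ is the \emph{only} place the curvature assumption enters the argument, and it is exactly what renders the comparison possible.

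First I would compute, for fixed $x$ and writing $a_y := u_t(y)-u_t(x)$, the defect between the two evolutions. Using $\partial_t u_t = \Delta u_t + \Gamma u_t$ together with $\Delta u_t(x) = \sum_y q(x,y) a_y$ and $\Gamma u_t(x) = \tfrac12 \sum_y q(x,y) a_y^2$, one obtains
\[
\Delta v_t(x) - \partial_t v_t(x) = e^{\alpha u_t(x)} \sum_y q(x,y)\, \phi_\alpha(a_y), \qquad \phi_\alpha(a) := e^{\alpha a} - 1 - \alpha a - \tfrac{\alpha}{2} a^2 .
\]
Hence $v_t$ is a subsolution ($\partial_t v_t \le \Delta v_t$) as soon as $\phi_\alpha \ge 0$ on $[-1,1]$, and a supersolution ($\partial_t v_t \ge \Delta v_t$) as soon as $\phi_\alpha \le 0$ on $[-1,1]$; by the gradient estimate only arguments $a \in [-1,1]$ occur in the sums.

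The heart of the matter is therefore the sign analysis of $\phi_\alpha$ on $[-1,1]$. Here $\phi_\alpha(0)=\phi_\alpha'(0)=0$, $\phi_\alpha''(0)=\alpha(\alpha-1)$, and $\phi_\alpha'(a)=\alpha\bigl(e^{\alpha a}-1-a\bigr)$. For $\alpha>1$ a short monotonicity discussion shows $\phi_\alpha$ is increasing on $[0,1]$ (hence nonnegative there) and that on $[-1,0]$ its minimum is attained at an endpoint; thus $\phi_\alpha \ge 0$ on $[-1,1]$ reduces to the single inequality $\phi_\alpha(-1) = e^{-\alpha} - 1 + \tfrac{\alpha}{2} \ge 0$. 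Since $\phi_\alpha(-1)$ is increasing in $\alpha$ for $\alpha>\ln 2$, this holds for all $\alpha$ beyond the root of $e^{-\alpha}=1-\tfrac\alpha2$ (numerically $\approx 1.594$), in particular for $\alpha\ge 1.60$. Symmetrically, for $0<\alpha<1$ one checks $\phi_\alpha\le 0$ on $[-1,0]$ automatically, while on $[0,1]$ the maximum sits at an endpoint, so $\phi_\alpha\le 0$ on $[-1,1]$ reduces to $\phi_\alpha(1)=e^{\alpha}-1-\tfrac{3\alpha}{2}\le 0$, valid up to the root of $e^{\alpha}=1+\tfrac32\alpha$ (numerically $\approx 0.763$), in particular for $0<\alpha\le 0.76$. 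The explicit constants $1.60$ and $0.76$ are thus clean decimals placed on the correct side of these two thresholds; the gap $(0.76,1.60)$ is genuine, since at $\alpha=1$ one has $\phi_1(-1)<0$ and neither inequality holds on all of $[-1,1]$. (Note the supersolution case really requires $\alpha>0$: for $\alpha<0$ one has $\phi_\alpha''(0)>0$, so $\phi_\alpha\ge 0$ near $0$ and the $\le$ comparison fails; the relevant range of the second inequality is $0<\alpha\le 0.76$.)

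Finally I would invoke the parabolic comparison principle for the graph heat equation in the left-derivative form already used in the proof of Theorem~\ref{thm:GradEst}. In the subsolution case set $g_t := w_t - v_t$, so $g_0 = 0$ and $\partial_t g_t \ge \Delta g_t$; tracking $m(t):=\min_x g_t(x)$ and using $\Delta g_t(x_t)\ge 0$ at a spatial minimizer $x_t$ gives $\partial_t^- m(t)\ge 0$, whence $g_t \ge 0$ and $P_t e^{\alpha u_0} \ge e^{\alpha u_t}$ for $\alpha \ge 1.60$. The supersolution case is identical with all inequalities reversed and yields $P_t e^{\alpha u_0} \le e^{\alpha u_t}$ for $0<\alpha \le 0.76$. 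I expect the main obstacle to be the global sign analysis of $\phi_\alpha$ on the entire interval $[-1,1]$: near $a=0$ the behaviour is governed by $\alpha(\alpha-1)$ while the endpoint values are governed by different transcendental expressions, and for $\alpha$ near $1$ these pull in opposite directions, so one must verify carefully that no interior sign change is overlooked and that the binding constraint is precisely the endpoint $a=\mp 1$ — this is exactly what pins down the two constants.
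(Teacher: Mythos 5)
Your proof is correct, and its core coincides with the paper's: you derive the same pointwise defect $e^{\alpha u_t(x)}\sum_y q(x,y)\,\phi_\alpha(a_y)$ with $\phi_\alpha(a)=e^{\alpha a}-1-\alpha a-\tfrac{\alpha}{2}a^2$, restrict to $a\in[-1,1]$ via Theorem~\ref{thm:GradEst}, and carry out the same sign analysis; your endpoint reduction, pinning $1.60$ and $0.76$ to the roots of $e^{-\alpha}=1-\alpha/2$ and $e^{\alpha}=1+\tfrac32\alpha$, actually fills in an elementary step that the paper only asserts. The one structural difference is the concluding mechanism: the paper differentiates the interpolation $G(s)=P_{t-s}e^{\alpha u_s}$ and uses positivity preservation of $P_{t-s}$, so the comparison drops out of $\partial_s G\le 0$ (resp.\ $\ge 0$) with no auxiliary lemma, whereas you run a parabolic maximum principle on $g_t=P_te^{\alpha u_0}-e^{\alpha u_t}$. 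The two are interchangeable (your comparison principle is itself provable by Duhamel, i.e.\ by the paper's interpolation), and your phrasing is the more elementary one, consistent with the left-derivative argument the paper already uses in Theorem~\ref{thm:GradEst}; the paper's version buys brevity. A genuine bonus in your write-up is the remark about negative $\alpha$: for $\alpha<0$ one has $\phi_\alpha''>0$ with $\phi_\alpha(0)=\phi_\alpha'(0)=0$, hence $\phi_\alpha\ge 0$ and the comparison goes the $\ge$ way, so the theorem's second inequality, as literally stated for all $\alpha\le 0.76$, should be read as $0\le\alpha\le 0.76$ --- harmless for the paper, since its only application (Theorem~\ref{thm:volDoubling}) takes $\alpha=0.76$.
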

\begin{proof}
Let
\[
G(s):=P_{t-s}e^{\alpha u_s}.
\]
Then,
\begin{align*}
\partial_s G(s) = P_{t-s} \left(\partial_s e^{\alpha u_s} - \Delta e^{\alpha u_s} \right)
= P_{t-s} \left(\alpha\partial_s(u_s) e^{\alpha u_s} - \Delta e^{\alpha u_s} \right) = P_{t-s} \left(\alpha(\Delta u_s +\Gamma u_s) e^{\alpha u_s} - \Delta e^{\alpha u_s} \right)
\end{align*}
We fix $x \in V$. With $t(y):=u_s(y)-u_s(x)$, we calculate
\[
\left(\alpha(\Delta u_s +\Gamma u_s) e^{\alpha u_s} - \Delta e^{\alpha u_s} \right) (x) = e^{\alpha u_s}(x)\sum_y q(x,y) 
\left[\alpha t(y)\left(1+ \frac {t(y)}2\right) - \left(e^{\alpha t(y)}-1 \right) \right].
\]
By Theorem~\ref{thm:GradEst}, we have $|t(y)|\leq 1$ for all $y \sim x$. For $\alpha \geq 1.60$ and $|t|\leq 1$, one has
\[
\alpha t(1+t/2) - (e^{\alpha t}-1) \leq 0
\]
and for 
$\alpha \leq 0.76$ and $|t|\leq 1$, one has
\[
\alpha t(1+t/2) - (e^{\alpha t}-1) \geq 0.
\]
Thus, for $\alpha \geq 1.60$, we have $\partial_s G(s) \leq 0$ yielding $P_t e^{\alpha u_0} = G(0) \geq G(t) = e^{\alpha u_t}$. Analogously, we obtain $P_t e^{\alpha u_0} \leq e^{\alpha u_t}$ in the case $\alpha \leq 0.76$.
This finishes the proof.
\end{proof}

As the heat semigroup of a reversible graph preserves the $\ell_1$ norm, we can already conclude that $\|e^{\alpha u_t}\|_1$ is decreasing in $t$ for $\alpha \geq 1.6$ and increasing for $\alpha \leq 0.76$.
The following theorem improves the bounds for $\alpha$ to $1.1$ and $1$ respectively.

\begin{theorem}[$\ell_1$-comparison]
\label{thm:ell1Compare}
Let $G=(V,q)$ be a finite reversible graph satisfying $CD(0,\infty)$. Let $m$ be the reversible measure. Suppose $\Gamma u_0 \leq q_{\min}/2$.
Then,
\[
\|e^{\alpha u_0}\|_1 \geq \|e^{\alpha u_t}\|_1 \qquad \mbox{ for } \alpha \geq \log 3 ,
\]
and
\[
\| e^{\alpha u_0}\|_1 \leq \|e^{\alpha u_t}\|_1 \qquad \mbox{ for } \alpha \leq 1.
\]
\end{theorem}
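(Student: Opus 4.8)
The plan is to differentiate $t \mapsto \|e^{\alpha u_t}\|_1 = \sum_x e^{\alpha u_t(x)} m(x)$ directly and read off the sign of the derivative, rather than going through the pointwise comparison of Theorem~\ref{thm:SemigroupCompare}. Using the defining equation $\partial_t u_t = \Delta u_t + \Gamma u_t$, one gets
\[
\frac{d}{dt}\|e^{\alpha u_t}\|_1 = \alpha \sum_x \big(\Delta u_t + \Gamma u_t\big)(x)\, e^{\alpha u_t(x)} m(x).
\]
I would then exploit reversibility to symmetrize both terms. Writing $w(x,y) := q(x,y)m(x) = q(y,x)m(y)$, summation by parts turns the Laplacian part into $-\tfrac12\sum_{x,y} w(x,y)(u_t(y)-u_t(x))(e^{\alpha u_t(y)}-e^{\alpha u_t(x)})$, while the identity $\Gamma u_t(x) = \tfrac12\sum_y q(x,y)(u_t(y)-u_t(x))^2$ together with a swap of $x$ and $y$ turns the $\Gamma$ part into $\tfrac14\sum_{x,y}w(x,y)(u_t(y)-u_t(x))^2(e^{\alpha u_t(x)}+e^{\alpha u_t(y)})$.

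Combining the two and factoring the positive quantity $\alpha\, w(x,y)\, e^{\alpha u_t(x)}$ out of each summand reduces everything to the scalar function
\[
\phi(s) := \frac{s}{2}\big(1-e^{\alpha s}\big) + \frac{s^2}{4}\big(1+e^{\alpha s}\big) = \frac{s}{4}\Big[(s+2)-(2-s)e^{\alpha s}\Big],
\]
evaluated at the edge differences $s = u_t(y)-u_t(x)$; here it is convenient to assume $\alpha>0$, the range relevant to the comparison with $\log P_t$. The crucial input is Theorem~\ref{thm:GradEst}: since the graph satisfies $CD(0,\infty)$ and $\Gamma u_0 \le q_{\min}/2$, we have $|u_t(y)-u_t(x)| \le 1$ for all neighbours $x\sim y$ and all $t\ge0$ (which also secures existence of $u_t$ for all $t$). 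Hence the whole derivative has the sign of $\phi$ on $[-1,1]$, and the two claims become purely scalar: the norm is nondecreasing as soon as $\phi(s)\ge0$ for all $|s|\le1$, and nonincreasing as soon as $\phi(s)\le0$ for all $|s|\le1$. Since $e^{\alpha s}\phi(-s)=\phi(s)$, the sign of $\phi$ is even in $s$, so it suffices to treat $s\in(0,1]$.

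For $s\in(0,1]$ the sign of $\phi(s)=\tfrac{s}{4}[(s+2)-(2-s)e^{\alpha s}]$ is governed by the comparison of $\alpha$ with
\[
h(s) := \frac1s\log\frac{2+s}{2-s},
\]
namely $\phi(s)\ge0 \iff \alpha\le h(s)$ and $\phi(s)\le0 \iff \alpha\ge h(s)$. The heart of the proof is therefore to show that $h$ is increasing on $(0,1]$ with $\lim_{s\to0^+}h(s)=1$ and $h(1)=\log 3$. I would establish monotonicity from
\[
h'(s) = \frac1s\Big(\frac{4}{4-s^2}-h(s)\Big),
\]
so that $h'(s)\ge0$ is equivalent to $L(s):=\frac{4s}{4-s^2}-\log\frac{2+s}{2-s}\ge0$; and this follows since $L(0)=0$ and $L'(s)=\frac{8s^2}{(4-s^2)^2}\ge0$. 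Consequently $h$ takes values in $(1,\log 3]$ on $(0,1]$, with infimum $1$ and maximum $\log 3$. Plugging back, $\phi\ge0$ on $[-1,1]$ holds exactly when $\alpha\le 1$, giving the increasing statement, and $\phi\le0$ on $[-1,1]$ holds exactly when $\alpha\ge\log 3$, giving the decreasing statement. I expect the main obstacle to be precisely this one-variable analysis: controlling $\phi$ uniformly on $[-1,1]$ and pinning down that the extreme values of $h$ are $1$ and $\log 3$, whereas the reduction via symmetrization and the gradient bound is routine once reversibility and Theorem~\ref{thm:GradEst} are in hand.
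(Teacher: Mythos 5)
Your proof is correct and follows essentially the same route as the paper: differentiate $\|e^{\alpha u_t}\|_1$, symmetrize over edges using reversibility, invoke Theorem~\ref{thm:GradEst} to confine the edge differences to $[-1,1]$, and reduce to the sign of the same scalar expression (your $\phi(s)$ is the paper's bracketed integrand multiplied by the positive factor $e^{\alpha s/2}/4$). The two differences are both to your credit: you actually carry out the one-variable analysis via $h(s)=\frac1s\log\frac{2+s}{2-s}$, locating the constants $1$ and $\log 3$ as $\lim_{s\to 0^+}h(s)$ and $h(1)$, where the paper only asserts ``an elementary calculation''; and your restriction to $\alpha>0$ is in fact necessary rather than a convenience, since for $\alpha<0$ the factored-out $\alpha$ reverses the sign of the derivative and the increasing claim genuinely fails (e.g.\ on a two-point graph), so the theorem's ``$\alpha\le 1$'' must be read as ``$0\le\alpha\le 1$'' --- a point the paper's own proof glosses over.
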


\begin{proof}
Let 
\[
G(s):=\|e^{\alpha u_s}\|_1.
\]
We write $w(x,y)=q(x,y)m(x)=q(y,x)m(y)$
Then,
\begin{align*}
\partial_s G(s) &= \langle \alpha \partial_s u_s,  e^{\alpha u_s} \rangle = \alpha \langle \Delta u_s + \Gamma u_s, e^{\alpha u_s} \rangle = \frac\alpha 2 \sum_{y\sim x}w(x,y) e^{\alpha u_s(x)}(u_s(y) - u_s(x))\left(2+u_s(y)-u_s(x) \right) \\
&=
\frac \alpha 4 \sum_{y\sim x} 
w(x,y)
\left[ e^{\alpha u_s(x)}(u_s(y) - u_s(x))\left(2+u_s(y)-u_s(x) \right)\right]  \\& \qquad\quad +w(x,y)
\left[ e^{\alpha u_s(y)}(u_s(x) - u_s(y))\left(2+u_s(x)-u_s(y) \right)\right] \\
&= \frac \alpha 4 \sum_{y\sim x}w(x,y)
\exp\left(\frac{\alpha(u_s(x)+u_s(y))}{2}\right) \cdot \left[ 
e^{-\alpha t/2}\cdot t (2+t) + e^{\alpha t/2} \cdot (-t) \cdot (2-t)
\right]
\end{align*}
where $t=t(s,x,y)=u_s(y)-u_s(x) \in [-1,1
]$ for all $y\sim x$ by Theorem~\ref{thm:GradEst}. An elementary calculation shows that for $t \in [-1,1]$, the expression
\[
e^{-\alpha t/2}\cdot t (2+t) + e^{\alpha t/2} \cdot (-t) \cdot (2-t)
\]
is non-negative if $\alpha \leq 1$ and non-positive if $\alpha \geq \log 3$.
Thus, $\partial_s G(s)$ is non-negative if $\alpha \leq 1$ and non-positive if $\alpha \geq \log 3$ which directly implies the claim of the theorem.
\end{proof}

We remark that in both Theorem~\ref{thm:ell1Compare} and Theorem~\ref{thm:ell1Compare}, the bounds for $\alpha$ can be even improved when assuming that $\Gamma u_0$ is bounded by a smaller constant.
We will use the semigroup comparison together with the Harnack inequality to prove volume doubling. The Harnack inequality will be proven in the next section.

\section{Li-Yau-Hamilton-Harnack type inequalities}

The gradient estimate $|u_t(y)-u_t(x)| \leq 1$ for $y \sim x$ from Theorem~\ref{thm:GradEst} turns out to be essential for proving various Li-Yau-Hamilton-Harnack type inequalities.

\subsection{Li-Yau inequality}

Under several curvature assumptions which seem hard to verify, a Li-Yau inequality has been proven for the heat semigroup \cite{bauer2015li,horn2014volume,munch2014li,
dier2017discrete,gong2018li}.
Using the gradient estimate from Theorem~\ref{thm:GradEst}, we prove a Li-Yau inequality under $CD(0,n)$ for the modified heat equation
\[
\partial_t u_t = \Delta u_t + \Gamma u_t.
\]
We recall that we consider $u_t$ as a substitute for $\log P_t f$.

\begin{theorem}[Li-Yau inequality]\label{thm:LiYau}
Let $G=(V,q)$ be a finite graph satisfying 
$CD(0,n)$ for some $n<\infty $.
Suppose $\|\Gamma u_0\|_\infty \leq q_{\min}/2$. 
Then, 
\[
\Gamma u_t - \partial_t u_t = -\Delta u_t \leq \frac{n}{2t}.
\]
\end{theorem}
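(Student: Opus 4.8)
The plan is to prove the Li-Yau inequality by the same maximum-principle-with-$\eps$-perturbation technique that worked for the gradient estimate, applied now to the quantity $-\Delta u_t = \Gamma u_t - \partial_t u_t$. The target bound $\frac{n}{2t}$ is unusual in that it blows up as $t\to 0^+$, so rather than controlling $-\Delta u_t$ directly it is cleaner to study the rescaled quantity $F_t := t\cdot(-\Delta u_t)$ and aim to show $F_t \le \frac n2$ for all $t>0$. The key structural input is the $CD(0,n)$ inequality $\Gamma_2 f \ge \frac1n(\Delta f)^2$, which is exactly the term that will produce the $(-\Delta u)^2$ needed to close a differential inequality of Riccati type.

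First I would compute $\partial_t(-\Delta u_t)$. Since $\partial_t u_t = \Delta u_t + \Gamma u_t$ and $\Delta$ commutes with $\partial_t$, we get $\partial_t(-\Delta u_t) = -\Delta(\Delta u_t + \Gamma u_t) = -\Delta^2 u_t - \Delta\Gamma u_t$. Using the Bochner-type identity $2\Gamma_2 u = \Delta\Gamma u - 2\Gamma(u,\Delta u)$ (the $k=1$ case of the definition of $\Gamma_2$), I would rewrite $\Delta\Gamma u_t = 2\Gamma_2 u_t + 2\Gamma(u_t,\Delta u_t)$ and substitute, obtaining an evolution equation for $-\Delta u_t$ whose leading curvature term is $-2\Gamma_2 u_t \le -\frac2n(\Delta u_t)^2 = -\frac2n(-\Delta u_t)^2$ under $CD(0,n)$. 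This is the crucial sign: the curvature contributes a negative quadratic in $-\Delta u_t$, which is what forces the $\frac{n}{2t}$ decay.

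Next I would run the maximum principle on $F_t = t(-\Delta u_t)$ exactly as in the proof of Theorem~\ref{thm:GradEst}. Computing $\partial_t F_t = (-\Delta u_t) + t\,\partial_t(-\Delta u_t)$ and evaluating at a vertex $x$ where $-\Delta u_t$ attains its maximum, the transport-type terms $-\Delta^2 u_t + 2\Gamma(u_t,\Delta u_t)$ should be controllable by maximality (the discrete analogue of "gradient vanishes and Laplacian is nonpositive at a max"), precisely here I expect to need the gradient bound $|u_t(y)-u_t(x)|\le 1$ from Theorem~\ref{thm:GradEst} to sign the cross terms, just as $1+f(y)-f(x)\ge0$ was used there. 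Discarding the good terms leaves a differential inequality of the shape $\partial_t^- \|F_t\|_\infty \le \|{-\Delta u_t}\|_\infty - \frac{2t}{n}\|{-\Delta u_t}\|_\infty^2 = \frac{F}{t} - \frac{2}{nt}F^2$, and one checks that $F = \tfrac n2$ is the stable fixed point of the corresponding ODE $\dot F = \frac{1}{t}F(1 - \frac2n F)$, so $F$ cannot cross above $\frac n2$. Introducing the $+\eps t$ barrier and taking $t_0$ to be the first crossing time yields the contradiction in the usual way.

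The main obstacle I anticipate is the discrete transport term. On a manifold the analogue of $-\Delta^2 u + 2\Gamma(u,\Delta u)$ at a maximum of $-\Delta u$ is handled by $\Delta(-\Delta u)\le 0$ together with the vanishing gradient, but in the graph setting $\Delta\Gamma u$ and $\Gamma(u,\Delta u)$ do not separate so cleanly, and I expect to have to expand these sums vertex-by-vertex and use the $|u_t(y)-u_t(x)|\le1$ bound to guarantee the weight factors $(1 + (u_t(y)-u_t(x)))$ are nonnegative, as was done at the end of the gradient-estimate proof. A secondary subtlety is the $t\to0^+$ behavior: I must verify that $F_t = t(-\Delta u_t)\to 0$ as $t\to0$ (immediate since $-\Delta u_t$ stays bounded on compact time intervals by smoothness from Picard--Lindelöf and Theorem~\ref{thm:GradEst}), so that the barrier argument has a valid starting configuration with $F_0 = 0 < \frac n2$.
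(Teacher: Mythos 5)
Your proposal follows essentially the same route as the paper's proof: a maximum principle applied to $F = t(-\Delta u_t)$ on $V\times[0,T]$, with $CD(0,n)$ entering through $\Delta \Gamma u = 2\Gamma_2 u + 2\Gamma(u,\Delta u) \geq \frac{2}{n}(\Delta u)^2 + 2\Gamma(u,\Delta u)$, the gradient bound $|u_t(y)-u_t(x)|\leq 1$ used to get $\Delta\Delta u(x) + 2\Gamma(u,\Delta u)(x) = \sum_y q(x,y)(\Delta u(y)-\Delta u(x))(1+u(y)-u(x)) \geq 0$ at the maximum, and the resulting Riccati-type inequality forcing $F \leq n/2$. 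Two minor points: the transport combination in the evolution is $-\Delta^2 u - 2\Gamma(u,\Delta u)$ rather than $-\Delta^2 u + 2\Gamma(u,\Delta u)$ (your stated weight factor $1+(u_t(y)-u_t(x))$ shows you intended the correct one), and the $\eps t$-barrier is unnecessary here, since at a maximum with $F > n/2$ one gets the strict inequality $\partial_t F \leq \frac{1}{t}F\bigl(1-\tfrac{2}{n}F\bigr) < 0$, contradicting $\partial_t F \geq 0$ directly.
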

\begin{proof}
We fix $T>0$.
Let $F=-t\Delta u$ on $V \times [0,T]$. 
Suppose $\max F(x,t) > n/2$. Let the maximum be attained at $(x,t)$ which is fixed from now on.
Then, $t>0$ and by $CD(0,n)$,
\begin{align*}
0\leq \partial_t F(x,t) = -t \Delta \partial_t u_t(x) - \Delta u_t(x) &= - t\Big(\Delta \Delta u_t(x)+ \Delta \Gamma u_t(x)\Big) - \Delta u_t(x)\\
&\leq -t\left(\Delta \Delta u_t(x) + 2\Gamma(u_t,\Delta u_t)(x) + \frac 2 n(\Delta u_t(x))^2\right) - \Delta u_t(x).
\end{align*}
We observe
\begin{align*}
\Delta\Delta u_t(x) + 2\Gamma(u_t,\Delta u_t)(x) &= \sum_y q(x,y) (\Delta u_t(y)-\Delta u_t (x))  + \sum_{y}q(x,y)(\Delta u_t(y)-\Delta u_t (x))(u_t(y)-u_t(x))\\
&=\sum_{y}q(x,y)(\Delta u_t(y)-\Delta u_t (x))(1+u_t(y)-u_t(x)) \geq 0
\end{align*}
since $\Delta u_t(y)-\Delta u_t (x) \geq 0$ for  by minimality of $\Delta u_t(x)$ and since $1+u_t(y)-u_t(x) \geq 0$ for $y \sim x$ by Theorem~\ref{thm:GradEst}.
Thus,
\begin{align*}
0\leq \partial_t F(x,t) \leq -\frac{2t}{n}(\Delta u_t(x))^2 - \Delta u_t(x).
\end{align*}
Rearranging gives $F(x,t)= -t\Delta u_t(x) \leq 2n $ and thus, $-\Delta u_t \leq \frac n{2t}$.  
This finishes the proof.
\end{proof}

\subsection{Harnack inequality}

One of the main application of Li-Yau inequality is the parabolic Harnack inequality which can be considered as in integrated version of Li-Yau inequality. The proof of the following theorem follows \cite{munch2014li} and \cite{bauer2015li}.

\begin{theorem}[Harnack inequality]\label{thm:Harnack}
Let $G=(V,q)$ be a finite graph satisfying 
$CD(0,n)$ for some $n<\infty $.
Suppose $\|\Gamma u_0\|_\infty \leq q_{\min}/2$. 
Then, for $x,y\in V$ and $T_2>T_1 >0$,
\[
u_{T_1}(x)-u_{T_2}(y) \leq \frac n 2 \log \left(\frac{T_2}{T_1}\right) + \frac{2d(x,y)^2}{q_{\min}(T_2-T_1)}
\]
\end{theorem}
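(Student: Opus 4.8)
The plan is to integrate the Li-Yau inequality of Theorem~\ref{thm:LiYau} along a path in space-time joining $(x,T_1)$ to $(y,T_2)$. Rewriting Theorem~\ref{thm:LiYau} as $\partial_t u_t = \Delta u_t + \Gamma u_t \ge \Gamma u_t - \frac{n}{2t}$, I have at every vertex and every time a lower bound on the time-derivative of $u$ in terms of the nonnegative term $\Gamma u_t$ and the explicit decaying term $-\frac{n}{2t}$. The guiding idea is that the $\Gamma$-term will be spent to pay for the spatial displacement from $x$ to $y$, while the $-\frac{n}{2t}$ term integrates to the logarithmic factor $\frac{n}{2}\log(T_2/T_1)$.

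Concretely, I would fix a shortest path $x=x_0\sim x_1\sim\cdots\sim x_d=y$ with $d=d(x,y)$, orienting each step so that the resting vertex is the tail of a present directed edge (possible since consecutive vertices are adjacent), choose intermediate times $T_1=\theta_0<\theta_1<\cdots<\theta_d=T_2$, and telescope
\[
u_{T_2}(y)-u_{T_1}(x)=\sum_{i=1}^{d}\left(\int_{\theta_{i-1}}^{\theta_i}\partial_s u_s(x_{i-1})\,ds+\big[u_{\theta_i}(x_i)-u_{\theta_i}(x_{i-1})\big]\right),
\]
so that each summand is a time-integral while resting at $x_{i-1}$ plus a spatial jump across the edge $x_{i-1}\sim x_i$. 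On the integral I apply the Li-Yau bound together with $2\Gamma u_s(x_{i-1})\ge q(x_{i-1},x_i)\,(u_s(x_i)-u_s(x_{i-1}))^2\ge q_{\min}(u_s(x_i)-u_s(x_{i-1}))^2$, retaining only the contribution of the traversed edge. Writing $a_i(s)=u_s(x_i)-u_s(x_{i-1})$, each summand is bounded below by $\int_{\theta_{i-1}}^{\theta_i}\frac{q_{\min}}{2}a_i(s)^2\,ds+a_i(\theta_i)-\frac{n}{2}\log\frac{\theta_i}{\theta_{i-1}}$. Completing the square in the edge-difference — idealized by the elementary inequality $\frac{q_{\min}\delta_i}{2}a^2+a\ge-\frac{1}{2q_{\min}\delta_i}$ with $\delta_i=\theta_i-\theta_{i-1}$ — produces a lower bound of the form $-\frac{c}{q_{\min}\delta_i}$ per edge.

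Summing over $i$, the logarithmic terms telescope to $-\frac{n}{2}\log(T_2/T_1)$, and I am left to minimize $\sum_i \delta_i^{-1}$ subject to $\sum_i\delta_i=T_2-T_1$. Taking the times equally spaced, $\delta_i=(T_2-T_1)/d$, gives $\sum_i\delta_i^{-1}=d^2/(T_2-T_1)$, hence a bound of the shape $u_{T_2}(y)-u_{T_1}(x)\ge-\frac{n}{2}\log\frac{T_2}{T_1}-\frac{c\,d(x,y)^2}{q_{\min}(T_2-T_1)}$, which rearranges to the claimed inequality once the constant $c$ is tracked through the square completion.

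The main obstacle is the discreteness of space: unlike the smooth Li-Yau--Harnack argument, the spatial displacement is realized by finite jumps, so the square completion must pair the jump $a_i(\theta_i)$, evaluated at a single time, with the time-integral of $\frac{q_{\min}}{2}a_i(s)^2$ over the whole resting interval. This mismatch is exactly where the gradient estimate of Theorem~\ref{thm:GradEst} is indispensable: the bound $|a_i(s)|\le 1$ keeps $a_i$ in the regime where the traversed-edge term dominates $\Gamma u_s(x_{i-1})$, prevents the jumps from being arbitrarily negative, and legitimizes the completion of the square; it is also the origin of the explicit numerical constant (here $2$) in the final estimate. Carefully controlling this constant, rather than the structural steps, is the only delicate point.
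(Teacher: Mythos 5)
Your global strategy --- telescope along a geodesic with $d(x,y)$ time-slices, use Li--Yau for the resting integrals, and use $\Gamma u_s(x_{i-1}) \ge \tfrac{q_{\min}}{2} a_i(s)^2$ to pay for the jump across the traversed edge --- is the same skeleton as the paper's proof. But there is a genuine gap at exactly the step you flag as delicate, and the gradient estimate does not repair it. Your per-edge bound needs an inequality of the form
\[
\int_{\theta_{i-1}}^{\theta_i} \frac{q_{\min}}{2}\, a_i(s)^2\, ds \;+\; a_i(\theta_i) \;\ge\; -\frac{c}{q_{\min}\,\delta_i},
\]
with the jump evaluated at the \emph{fixed} endpoint $\theta_i$. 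This is false for general continuous $a_i \colon [\theta_{i-1},\theta_i] \to [-1,1]$: take $a_i \equiv 0$ on $[\theta_{i-1}, \theta_i - \eps]$ and let it drop linearly to $-1$ at $\theta_i$. Then the integral is $O(q_{\min}\eps)$ while $a_i(\theta_i) = -1$, so the left-hand side tends to $-1$, whereas $-c/(q_{\min}\delta_i)$ is arbitrarily close to $0$ when $\delta_i = (T_2-T_1)/d$ is large. Nothing in your argument excludes this behaviour of the solution; the completion of the square would be legitimate only if $a_i$ were essentially constant in time. The bound $|a_i|\le 1$ from Theorem~\ref{thm:GradEst} merely caps each jump at $-1$, which summed over the path gives $-d(x,y)$; that is strictly weaker than the claimed $-\frac{2 d(x,y)^2}{q_{\min}(T_2 - T_1)}$ whenever $T_2 - T_1 > 2 d(x,y)/q_{\min}$, which is precisely the regime needed for the volume doubling application.

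The missing idea is that the jump time must be \emph{chosen}, not fixed at an endpoint of the subinterval. The paper jumps at a time $s$ minimizing $\sqrt{2\Gamma u_s(y)/q_{\min}} - \int_s^{T_2} \Gamma u_t(y)\,dt$ and invokes \cite[Lemma~5.3]{munch2014li}: for continuous $g \ge 0$ on an interval of length $\delta$ ending at $T_2$,
\[
\min_{s}\left(\sqrt{\frac{2 g(s)}{q_{\min}}} - \int_s^{T_2} g(t)\, dt\right) \le \frac{2}{q_{\min}\,\delta}.
\]
(Sketch: otherwise $w(s) := \int_s^{T_2} g + \frac{2}{q_{\min}\delta}$ satisfies $-w' > \frac{q_{\min}}{2} w^2$, and integrating $(1/w)' > q_{\min}/2$ contradicts $w(T_2) = \frac{2}{q_{\min}\delta}$ together with $w>0$.) This Riccati-type comparison, not a pointwise square completion, is what converts the quadratic $\Gamma$-term into the $1/\delta$ jump cost. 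With it, the paper first proves the two-point inequality for adjacent vertices with cost $\frac{2}{q_{\min}(T_2-T_1)}$ --- also handling the non-reversible case $q(x,y)>0=q(y,x)$ by keeping the $\Gamma$-integral at the vertex from which the edge emanates, in the spirit of your orientation remark --- and only then concatenates over $d(x,y)$ equal subintervals, exactly as you propose. Your outer structure is fine; the inner per-edge estimate is where your proof breaks.
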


\begin{proof}
We first prove the claim for $x\sim y$. We first assume $q(y,x)>0$. By Li-Yau inequality (Theorem~\ref{thm:LiYau}),
\begin{align*}
u_{T_1}(x)-u_{T_2}(y) &= -\int_{T_1}^s \partial_t u_t(x) dt + (u_s(y)-u_s(x)) - \int_{s}^{T_2}\partial_t u_t(y)dt
\\&\leq \int_{T_1}^{T_2} \frac n{2t} dt - \int_{s}^{T_2} \Gamma u_t(y) dt + (u_s(y)-u_s(x))
\\& 
\leq 
\frac n 2 \log \left(\frac{T_2}{T_1}\right) - \int_{s}^{T_2} \Gamma u_t(y) dt  + 
\sqrt{\frac{2\Gamma u_s(y)}{q_{\min}}}.
\end{align*}
Minimizing over $s \in [T_1,T_2]$ and applying \cite[Lemma~5.3]{munch2014li} gives
\[
u_{T_1}(x)-u_{T_2}(y) \leq \frac n 2 \log \left(\frac{T_2}{T_1}\right) + \frac{2}{q_{\min}(T_2-T_1)}
\]
as desired.
In the case $q(x,y)>0=q(y,x)$, we keep the $\int_{T_1}^s\Gamma u_t(x)$ term instead of the $\int_{s}^{T_2} \Gamma u_t(y)$ term, and we estimate $u_s(y)-u_s(x)$ by the gradient at $x$ instead of $y$. By doing so, we end up with the same inequality for $u_{T_1}(x)-u_{T_2}(y)$.

The general case, i.e. when $x$ and $y$ are not adjacent, follows by subdividing the time interval $[T_1,T_2]$ in $d(x,y)$ many subintervals of the same size and by applying the above inequality successively. This finishes the proof.
\end{proof}


\subsection{Hamilton type estimate}
The Hamilton estimate (see \cite{hamilton1993matrix}) states that on Riemannian manifolds with non-negative Ricci curvature, one has for all positive functions $f$,
\[
\Gamma \log P_t f \leq \frac 1 t \log \left(\frac{\|f\|_\infty}{P_t f} \right).
\]
A discrete version of this estimate under a modified Bakry Emery condition has been established in \cite{horn2019spacial}.
Here, we prove the Hamilton type estimate for the modified heat equation under the $CD(K,\infty)$ condition.
\begin{theorem}[Hamilton gradient estimate]\label{thm:Hamilton}
Let $G=(V,q)$ be a finite graph satisfying $CD(K,\infty)$ for some $K\geq  0$. Suppose $u_0 \leq 0$ and $\Gamma u_0 \leq \frac {q_{\min}}2$. Then,
\[
\Gamma u_t \leq -\frac{u_t}{\phi(t)}.
\]
with
\[
\phi(t)= \begin{cases}
\frac{e^{2Kt}-1}{2K} &: K>0 \\
t&: K=0.
\end{cases}
\]
\end{theorem}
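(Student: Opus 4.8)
The plan is to run a parabolic maximum principle on the single scalar quantity
\[
G_t(x) := \phi(t)\,\Gamma u_t(x) + u_t(x),
\]
and to show that $\max_{x} G_t(x)$ is non-increasing in $t$. Since $\phi(0)=0$, the initial datum is $G_0 = u_0 \le 0$ by assumption, so monotonicity of the maximum immediately yields $G_t \le 0$, which is exactly $\Gamma u_t \le -u_t/\phi(t)$. The engine of the whole argument is the elementary observation that $\phi$ solves the linear ODE $\phi'(t) = 1 + 2K\phi(t)$ with $\phi(0)=0$ in both regimes $K>0$ and $K=0$; this is the identity that lets the curvature term and the weight conspire correctly below.

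First I would differentiate $G_t$ in time. Using $\partial_t u_t = \Delta u_t + \Gamma u_t$ and $\partial_t \Gamma u_t = \Delta \Gamma u_t - 2\Gamma_2 u_t + 2\Gamma(u_t,\Gamma u_t)$ (the defining relation of $\Gamma_2$), together with $CD(K,\infty)$ in the form $\Gamma_2 u_t \ge K\Gamma u_t$, I get
\[
\partial_t G_t \le \phi'\,\Gamma u_t + \phi\big(\Delta\Gamma u_t - 2K\Gamma u_t + 2\Gamma(u_t,\Gamma u_t)\big) + \Delta u_t + \Gamma u_t.
\]
Substituting $\phi' = 1 + 2K\phi$ cancels the two $2K\phi\,\Gamma u_t$ contributions and collapses the $\Gamma u_t$ prefactors to exactly $2\Gamma u_t$, leaving
\[
\partial_t G_t \le 2\Gamma u_t + \phi\big(\Delta\Gamma u_t + 2\Gamma(u_t,\Gamma u_t)\big) + \Delta u_t.
\]

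Next I would evaluate this at a vertex $x$ where $G_t$ attains its spatial maximum and reuse the pointwise identity already employed in the proof of Theorem~\ref{thm:LiYau}, namely
\[
\Delta\Gamma u_t(x) + 2\Gamma(u_t,\Gamma u_t)(x) = \sum_y q(x,y)\big(\Gamma u_t(y)-\Gamma u_t(x)\big)\big(1+u_t(y)-u_t(x)\big).
\]
Writing $a_y := \Gamma u_t(y)-\Gamma u_t(x)$ and $b_y := u_t(y)-u_t(x)$, and noting $2\Gamma u_t(x)=\sum_y q(x,y)b_y^2$ and $\Delta u_t(x)=\sum_y q(x,y)b_y$, the bound becomes $\partial_t G_t(x) \le \sum_y q(x,y)\big[b_y^2 + \phi\,a_y(1+b_y) + b_y\big]$. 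Spatial maximality of $G_t$ at $x$ reads $\phi\,a_y + b_y \le 0$, and Theorem~\ref{thm:GradEst} guarantees $1+b_y \ge 0$ for $y\sim x$; multiplying these gives $\phi\,a_y(1+b_y) \le -b_y(1+b_y)$, so each bracket is at most $b_y^2 - b_y - b_y^2 + b_y = 0$. Hence $\partial_t G_t(x) \le 0$ at every spatial maximizer.

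Finally I would upgrade ``the derivative is nonpositive at maximizers'' to ``the maximum is non-increasing''. To make this rigorous I would copy the $\varepsilon t$ perturbation device from the proof of Theorem~\ref{thm:GradEst}: assuming $G_{t^*}(x^*)>0$ for some $t^*>0$, pick $\varepsilon$ so small that $h(t):=\max_x G_t(x)-\varepsilon t$ satisfies $h(0)\le 0 < h(t^*)$, let $t_0:=\sup\{t\le t^*: h(t)\le 0\}$ be the last time $h$ vanishes, and observe that the right Dini derivative of $\max_x G_t$ at $t_0$ is bounded by $\partial_t G_{t_0}(x_0)\le 0$ for a maximizer $x_0$, forcing $D^+h(t_0)\le -\varepsilon<0$ and contradicting $h(t_0)=0$ with $h>0$ just afterwards. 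The main obstacle is the algebraic step: the entire proof hinges on $\phi$ being the \emph{exact} solution of $\phi'=1+2K\phi$, since only then do the curvature term and the time derivative of the weight combine to produce the coefficient $2$ in front of $\Gamma u_t$ needed to cancel the $b_y^2$ and $b_y$ terms after invoking the sharp gradient bound $b_y\ge -1$; any other weight leaves an uncancelled remainder. The remaining care is the standard bookkeeping of Dini derivatives of a finite maximum.
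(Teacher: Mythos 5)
Your proof is correct, and it takes a genuinely different route from the paper's. The paper works with the quotient $H = -\phi(t)\Gamma u_t/u_t$: it first reduces w.l.o.g.\ to $\sup u_0 < 0$ and invokes Theorem~\ref{thm:Monotonicity} to guarantee $u_t<0$ for all $t$ (so the quotient is defined), then evaluates $\partial_t H \geq 0$ at a space-time maximum of $H$ on $V\times[0,T]$ with maximum value $C$; there the maximality inequality reads $\phi\,a_y + C\,b_y \leq 0$ (writing $a_y := \Gamma u_t(y)-\Gamma u_t(x)$, $b_y := u_t(y)-u_t(x)$ as you do), and the same cancellation $\phi' = 1+2K\phi$ yields $0 \leq C(1-C)/\phi(t)$, hence $C\leq 1$. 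Your additive functional $G_t = \phi(t)\Gamma u_t + u_t$ reaches the same conclusion from the maximality inequality $\phi\,a_y + b_y \leq 0$, and it buys real simplifications: no division by $u_t$, hence no strict-negativity reduction and no appeal to Theorem~\ref{thm:Monotonicity} at all (you use $u_0\leq 0$ only to seed $G_0\leq 0$); the key algebra collapses to the single product inequality $(\phi\,a_y + b_y)(1+b_y)\leq 0$; and the conclusion follows from monotonicity of $\max_x G_t$ rather than from bounding the value of a quotient. The price is that you must carry out the Dini-derivative/$\varepsilon t$ bookkeeping explicitly, which you do correctly (the paper's space-time-maximum formulation sweeps this under the rug), with the only cosmetic imprecision that the right Dini derivative of the finite maximum equals the maximum of $\partial_t G_{t_0}(x)$ over all maximizers $x$, each of which your argument shows is $\leq 0$. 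Both proofs ultimately rest on the same ingredients: the bound $1+u_t(y)-u_t(x)\geq 0$ for $y\sim x$ from Theorem~\ref{thm:GradEst}, the curvature condition applied through $\partial_t \Gamma u_t = \Delta\Gamma u_t - 2\Gamma_2 u_t + 2\Gamma(u_t,\Gamma u_t)$, the pointwise identity for $\Delta\Gamma u_t + 2\Gamma(u_t,\Gamma u_t)$ at a maximizer, and the ODE $\phi' = 1+2K\phi$, $\phi(0)=0$.
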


\begin{proof}
W.l.o.g., we can assume that $\sup u_0 < 0$ giving $\sup_{t} u_t <0$ by Theorem~\ref{thm:Monotonicity}. Let 
\[
H(\cdot,t):=-\frac{\phi(t)\Gamma u_t}{u_t}
\]
be defined on $V\times [0,T]$ for some fixed $T>0$. Assume the maximum of $H$ is attained at $(x,t)$. Denote the maximum by $C$.
W.l.o.g., we have $C>0$ and $t>0$ giving $\partial_t H(x,t) \geq 0$.
Moreover, $\Gamma u(x)=-Cu(x)/\phi(t)$ and $\Gamma u \leq - Cu/\phi(t)$. We aim to show $C\leq 1$ which will be done by computing $\partial_t H$ in the maximum point and by systematically replacing or estimating $\Gamma u$ by $-Cu/\phi(t)$.
By Theorem~\ref{thm:GradEst}, we have
 $1+u(y)-u(x)\geq 0$ for $y\sim x$ and thus at the maximum point $(x,t)$,
\begin{align*}
2K\Gamma u(x) + \partial_t \Gamma u(x) &= 2K\Gamma u(x)+
2\Gamma(u,\Delta u + \Gamma u) (x) \\
&\leq \Delta \Gamma u(x) + 2\Gamma(u,\Gamma u)(x)
\\&=
 \sum_{y\sim x}q(x,y)(\Gamma u(y)-\Gamma u(x))(1+u(y)-u(x)) \\
&\leq -\frac C {\phi(t)} \sum_{y\sim x}q(x,y)(u(y)-u(x))(1+u(y)-u(x)) \\
&=-\frac C {\phi(t)} \left( \Delta u + 2 \Gamma u\right)(x)\\
&=-\frac C {\phi(t)} \partial_t u(x) + \frac{C^2}{{\phi(t)}^2}u(x).
\end{align*}
where we applied $CD(0,\infty)$ in the first estimate and the inequality $1+u(y)-u(x) \geq 0$ in the second estimate.
Hence,
\[
\partial_t \Gamma u(x) \leq -\frac C {\phi(t)} \partial_t u(x) + \frac{C^2}{{\phi(t)}^2}u(x) + \frac{2KC}{\phi(t)}u(x).
\]
We now estimate the time derivative of $H$ at $(x,t)$, 
\begin{align*}
0 \leq \partial_t H &= -\phi'(t)\frac{\Gamma u}{u} - \phi(t)\left(\frac{\partial_t \Gamma u}u  - \frac {\Gamma u \partial_t u}{u^2}\right)\\
&=\frac {C\phi'(t)}{\phi(t)} - \phi(t) \left(\frac{\partial_t \Gamma u}u +\frac C {\phi(t)} \frac {\partial_t u}{u} \right) \\
&\leq \frac {C\phi'(t)}{\phi(t)} - \phi(t) \left(\frac{-\frac C {\phi(t)} \partial_t u + \frac{C^2}{\phi(t)^2}u + \frac{2KC}{\phi(t)}u}u +\frac C {\phi(t)} \frac {\partial_t u}{u} \right) \\
&=\frac{C(\phi'(t)-C-2K\phi(t))}{\phi(t)}\\
&=\frac{C(1-C)}{\phi(t)}
\end{align*}
which clearly gives $C \leq 1$. Hence, $H\leq 1$ which implies
\[
\Gamma u_t \leq -\frac {u_t} {\phi(t)}
\]
as desired. This finishes the proof.
\end{proof}
By integrating the Hamilton estimate over space, we also get a corresponding Harnack inequality.

\begin{theorem}[Hamilton Harnack inequality]\label{thm:HamiltonHarnack}
Let $G=(V,q)$ be a finite graph satisfying $CD(0,\infty)$. Suppose $q(x,y)>0$ for all $x\sim y$.
 Suppose $u_0 \leq 0$ and $\Gamma u_0 \leq \frac {q_{\min}}2$. Then for all $x,y \in V$ and all $t>0$,
\[
\left|\sqrt{-u_t(y)}- \sqrt{-u_t(x)} \right| \leq \frac{d(x,y)}{\sqrt{2tq_{\min}}}.
\]
\end{theorem}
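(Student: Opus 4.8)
The plan is to integrate the Hamilton gradient estimate of Theorem~\ref{thm:Hamilton}, specialized to $K=0$ so that $\phi(t)=t$ and hence $\Gamma u_t \leq -u_t/t$ for all $t>0$. Since the claimed inequality is a Lipschitz-type bound on $\sqrt{-u_t}$ with respect to the graph distance, I would first reduce to adjacent vertices: taking a geodesic $x=x_0\sim x_1 \sim \cdots \sim x_d = y$ with $d=d(x,y)$ and summing the one-step bounds via the triangle inequality immediately yields the general statement, so it suffices to prove
\[
\left|\sqrt{-u_t(y)}-\sqrt{-u_t(x)}\right| \leq \frac{1}{\sqrt{2tq_{\min}}}
\]
for $x\sim y$.

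For adjacent $x$ and $y$ I would set $v:=\sqrt{-u_t}\geq 0$, so that $u_t=-v^2$ and $-u_t(x)=v(x)^2$. Without loss of generality assume $v(x)\leq v(y)$, i.e.\ I evaluate the Hamilton estimate at the endpoint where $-u_t$ is smaller; since $q(x,y)>0$ for every edge by hypothesis, retaining only the $y$-summand of $\Gamma u_t(x)=\tfrac12\sum_z q(x,z)(u_t(z)-u_t(x))^2$ gives
\[
\tfrac12 q(x,y)\bigl(u_t(y)-u_t(x)\bigr)^2 \leq \Gamma u_t(x) \leq -\frac{u_t(x)}{t} = \frac{v(x)^2}{t}.
\]
Substituting $u_t=-v^2$ and factoring the difference of squares as $(u_t(y)-u_t(x))^2=(v(y)-v(x))^2(v(y)+v(x))^2$, the key quantitative step is the elementary bound $(v(y)+v(x))^2\geq 4v(x)^2$, which holds precisely because $v(x)$ was chosen to be the smaller value. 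This is what produces the sharp constant $1/\sqrt 2$.

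Combining these and dividing by $v(x)^2$ (in the case $v(x)>0$) yields $2q(x,y)(v(y)-v(x))^2\leq 1/t$, hence $(v(y)-v(x))^2\leq 1/(2tq(x,y))\leq 1/(2tq_{\min})$, which is the desired one-step bound. The degenerate case $v(x)=0$ requires only a trivial remark: then the displayed inequality forces $\Gamma u_t(x)=0$, so $u_t(y)=u_t(x)=0$ and $v(y)=v(x)=0$, and the bound holds with both sides zero. The main obstacle is not any single estimate but getting the constant $1/\sqrt 2$ sharp; this hinges entirely on evaluating the Hamilton estimate at the vertex with the smaller value of $-u_t$, so that $(v(y)+v(x))^2$ can be bounded below by $4v(x)^2$ rather than merely by $v(x)^2$, the latter costing an extra factor of two.
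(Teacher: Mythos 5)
Your proof is correct and follows essentially the same route as the paper: reduce to adjacent vertices, bound $\Gamma u_t(x)$ from below by the single edge term $\tfrac12 q(x,y)(u_t(y)-u_t(x))^2$, apply the Hamilton estimate $\Gamma u_t \leq -u_t/t$ at the vertex where $-u_t$ is smaller, and exploit the difference-of-squares factorization together with $v(x)+v(y)\geq 2v(x)$. The only cosmetic difference is that you work with squared quantities and divide by $v(x)^2$ (handling $v(x)=0$ separately, which the paper glosses over), whereas the paper divides by $\sqrt{-u_t(x)}+\sqrt{-u_t(y)}$ directly.
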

\begin{proof}
For simplicity, we write $u$ instead of $u_t$.

 We can assume $x\sim y$ and $-u(y) \geq -u(x)$ without loss of generality since we assumed $q(x,y)>0$ for all $x\sim y$.
Thus, we have
\begin{align*}
\left(\sqrt{-u(y)}-\sqrt{-u(x)}\right)\left(\sqrt{-u(y)}+\sqrt{-u(x)}\right) =u(x)-u(y) \leq \sqrt{\frac{ 2\Gamma u(x)}{q_{\min}}} \leq  \sqrt{\frac{ -2u(x)}{tq_{\min}}} \leq \frac{\sqrt{-u(y)}+\sqrt{-u(x)}}{\sqrt{2tq_{\min}}}
\end{align*}
where we applied Theorem~\ref{thm:Hamilton} in the second estimate.  Dividing by $\left(\sqrt{-u(y)}+\sqrt{-u(x)}\right)$ gives
\begin{align*}
\left| \sqrt{-u(y)}-\sqrt{-u(x)} \right| \leq \frac{1}{\sqrt{2tq_{\min}}}
\end{align*}
which finishes the proof.
\end{proof}

\begin{remark}
A similar estimate was shown in \cite[Theorem~2.5.2]{munch2019non} under non-negative Ollivier curvature, namely
\[
|P_t f(y)-P_t f(x)| \leq \frac{d(x,y)}{\sqrt{tq_{\min}}} \cdot \|f\|_\infty.
\]
In case of non-negative Bakry Emery curvature, there is a corresponding gradient estimate stating
\[
\Gamma P_t f \leq \frac{1}{2t}\left(P_t f^2 - (P_t f)^2 \right),
\]
see e.g. \cite{liu2014eigenvalue,
klartag2015discrete,lin2015equivalent}.
\end{remark}

\section{Applications}

As applications of Li-Yau inequality, we prove volume doubling and show that there exist no expander graphs satisfying $CD(0,n)$.

\subsection{Volume doubling}

One of the major questions regarding discrete Ricci curvature is whether non-negative Bakry-Emery curvature implies volume doubling.
An affirmative answer was given for the special case of normalized birth death chains in \cite{hua2017ricci}. However the general case stayed open. In this section, we prove volume doubling.

Let us briefly recall the proof idea from the Riemannian setting.
We take
\[
f:=1_{B_r(x)}.
\]
Then with $t = C r^2$,
\[
P_t f(x) \geq \frac 1 2
\]
and with $T=2t$ and Harnack inequality,
\[
P_T f(y) \geq P_t f(x) \cdot \left(\frac{T}{t}\right)^{n/2} \exp \left(\frac{d(x,y)^2}{4(T-t)} \right) \geq c>0
\]
for all $y \in B_{2r}(x)$ where $c$ is independent from $r$.
Thus,
\[
\vol(B_r(x))= \|f\|_1=\|P_t f\|_1 \geq \|c 1_{B_{2r(x)}}\|_1 = c \vol(B_{2r}(x)).
\]
Our proof of volume doubling follows the same idea, however we take a detour via the modified heat equation to apply Harnack inequality. 

\begin{theorem}[Volume doubling]\label{thm:volDoubling}
Let $G=(V,q)$ be a finite reversible graph satisfying $CD(0,n)$ for some $n<\infty$. Let $r \geq 4n^2 D/q_{\min}$ and $x \in V$. Then,
\[
\frac{m(B_{2r}(x))}{m(B_{r}(x))} \leq \left( 9n \sqrt{\frac {D}{q_{\min}}}\right)^{3 n}. 
\]
\end{theorem}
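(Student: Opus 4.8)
The plan is to transcribe the Riemannian argument recalled above, with the linear semigroup $P_t$ replaced by the modified flow $u_t=L_tu_0$ and with the comparison results of Section~2 standing in for mass conservation and the classical Harnack inequality. Fix the center $x$, abbreviate $V(\rho):=m(B_\rho(x))$ and $\theta:=\sqrt{D/q_{\min}}$, and take as initial datum a capped distance function, $u_0:=-\theta^{-1}\max\{0,d(x,\cdot)-r\}$, which is flat on $B_r(x)$ and decays with slope $\theta^{-1}=\sqrt{q_{\min}/D}$ outside. This slope is exactly calibrated so that $\|\Gamma u_0\|_\infty\le q_{\min}/2$; hence Theorems~\ref{thm:GradEst}, \ref{thm:Monotonicity}, \ref{thm:SemigroupCompare}, \ref{thm:ell1Compare}, \ref{thm:LiYau}, \ref{thm:Harnack} and \ref{thm:HamiltonHarnack} are all available. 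In particular $u_t\le 0$ and $|u_t(y)-u_t(z)|\le 1$ for $y\sim z$, and since $e^{\alpha u_0}\ge \mathbf 1_{B_r}$ pointwise for every $\alpha>0$, the initial datum is a legitimate smooth substitute for $\mathbf 1_{B_r(x)}$.

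First I would establish that the flow stays concentrated at the center up to a diffusive time $t_1\sim r^2/q_{\min}$. Applying the semigroup comparison (Theorem~\ref{thm:SemigroupCompare}) with an exponent $\alpha\le 0.76$ gives $e^{\alpha u_{t_1}(x)}\ge P_{t_1}e^{\alpha u_0}(x)\ge P_{t_1}\mathbf 1_{B_r}(x)$, so a lower bound $P_{t_1}\mathbf 1_{B_r}(x)\ge \tfrac12$ would yield $u_{t_1}(x)\ge -C(n)$ with $C(n)$ independent of $r$. Next I would spread this to the doubled ball. The parabolic Harnack inequality (Theorem~\ref{thm:Harnack}) with $T_1=t_1$, $T_2=2t_1$ gives, for every $y\in B_{2r}(x)$,
\[
u_{T_2}(y)\ \ge\ u_{T_1}(x)-\tfrac n2\log 2-\frac{2(2r)^2}{q_{\min}(T_2-T_1)}\ \ge\ -C'(n,\theta),
\]
the last term being bounded because $T_2-T_1=t_1\sim r^2/q_{\min}$; the elliptic-type estimate of Theorem~\ref{thm:HamiltonHarnack} (available since reversibility forces $q(x,y)>0$ whenever $x\sim y$) spreads the same bound across $B_{2r}(x)$ at the single time $t_1$. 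The assumption $r\ge 4n^2D/q_{\min}$ is what places $t_1$ in the regime where these estimates bite and the constants stay clean.

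Granting a uniform lower bound $u_{T_2}\ge -C'$ on $B_{2r}(x)$, I would return to volumes through the $\ell_1$-comparison (Theorem~\ref{thm:ell1Compare}). Fixing $\alpha\ge\log 3$, the mass $\|e^{\alpha u_t}\|_1$ is non-increasing, so
\[
V(2r)\,e^{-\alpha C'}\ \le\ \sum_{y\in B_{2r}(x)}e^{\alpha u_{T_2}(y)}m(y)\ \le\ \|e^{\alpha u_{T_2}}\|_1\ \le\ \|e^{\alpha u_0}\|_1 .
\]
It then remains to bound $\|e^{\alpha u_0}\|_1$ by a constant multiple of $V(r)$. Here $e^{\alpha u_0}$ equals $1$ on $B_r(x)$ and decays like $e^{-\alpha\theta^{-1}(d-r)}$ outside, so the tail is controlled once the exponential decay beats the growth of $V$. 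Crucially, Li-Yau (Theorem~\ref{thm:LiYau}) supplies an a priori \emph{polynomial} volume bound: integrating $\partial_t u_t(x)\ge\Delta u_t(x)\ge -n/(2t)$ gives $u_t(x)\gtrsim -\tfrac n2\log(tD)$, which combined with the spreading estimate forces $V(\rho)\lesssim V(r)\,(\rho/r)^{Cn}$; this polynomial growth is then easily dominated by the exponential decay of $e^{\alpha u_0}$, yielding $\|e^{\alpha u_0}\|_1\le 2V(r)$. Substituting and bookkeeping the constants $C'$, $\alpha$, the per-step volume growth rate $\le(D/q_{\min})^2$, and the polynomial exponent then delivers the stated bound $(9n\sqrt{D/q_{\min}})^{3n}$.

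The step I expect to be the real obstacle is the concentration estimate $u_{t_1}(x)\ge -C(n)$ at the diffusive scale $t_1\sim r^2/q_{\min}$. On a general graph the combinatorial distance only guarantees that the heat content of $B_r$ survives for the \emph{ballistic} time $\sim r/D$—reaching distance $r$ needs $r$ jumps—after which $P_t\mathbf 1_{B_r}(x)$ may have decayed, and the bound from Li-Yau alone, $u_{t_1}(x)\gtrsim-\tfrac n2\log(t_1D)\sim -n\log(\theta r)$, is genuinely $r$-dependent rather than $O(1)$. Upgrading this to honest diffusive concentration—so that the center value, and hence the spread value over $B_{2r}(x)$, remains uniformly bounded in $r$—is precisely where the curvature condition $CD(0,n)$ must be used decisively, and it is the linchpin of the whole estimate. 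The accompanying delicacy is to match the scales $r$ and $2r$ (and the times $t_1$ and $2t_1$) so that the logarithmic losses in the center value cancel rather than accumulate into an unbounded factor; reconciling these is what forces both the large constant and the lower bound on $r$.
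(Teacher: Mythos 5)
Your outline follows the right global strategy (concentrate at the center, spread with Harnack, compare $\ell_1$ masses), but it has two genuine gaps, and both occur exactly where the paper's proof introduces its key devices. The first is the one you flag yourself: the concentration estimate $u_{t_1}(x)\geq -C(n)$ at the diffusive time $t_1\sim r^2/q_{\min}$. This is not a fillable technicality along your route: $P_t 1_{B_r}(x)\geq \tfrac12$ at diffusive times is not available, and the paper never proves it. Instead, the paper avoids indicator-type data altogether. It takes $u_0=-\frac{C}{r}d(x,\cdot)\vee(-C)$ (slope $C/r$, vanishing only at the center, and capped below), and controls the \emph{linear} semigroup on this small-gradient Lipschitz function via the $CD(0,n)$ gradient estimate of \cite{lin2015equivalent},
\[
(\Delta P_t u_0)^2 \leq \frac{n}{2t}\left(P_t\Gamma u_0 - \Gamma P_t u_0\right) \leq \frac{n}{2t}\|\Gamma u_0\|_\infty \leq \frac{nDC^2}{4tr^2},
\]
which integrates in time to $|P_t u_0 - u_0|\leq C\sqrt{nDt}/r$. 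Since $u_0(x)=0$, this keeps $P_t u_0(x)$, and hence via Theorem~\ref{thm:SemigroupCompare} with $\alpha=0.76$ also $u_t(x)$, bounded below by a dimensional constant on a genuinely diffusive time scale. So curvature does enter "decisively" where you predicted, but through a Li-Yau-type estimate for the linear semigroup applied to Lipschitz data, not through heat content of an indicator.

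The second gap is your tail bound $\|e^{\alpha u_0}\|_1\leq 2V(r)$. You justify it by a polynomial volume growth $V(\rho)\lesssim V(r)(\rho/r)^{Cn}$ "forced by Li-Yau", but polynomial growth is a \emph{consequence} of the volume doubling you are proving, and your derivation of it needs a bounded anchor value $u_{t_0}(x)$ — i.e., the unresolved first gap — so the argument is circular as stated. The paper's cap at level $-C$ is precisely what removes the need for any global volume information: on one side, $e^{\beta u_0}\leq e^{-\beta C}+(1-e^{-\beta C})1_{B_r(x)}$ gives $\|e^{\beta u_0}\|_1\leq e^{-\beta C}m(V)+m(B_r(x))$, and on the other side, $u_T\geq -C$ everywhere (monotonicity against the constant solution) together with $u_T\geq -Q$ on $B_{2r}(x)$ (from Harnack) gives $\|e^{\beta u_T}\|_1\geq e^{-\beta C}m(V)+(e^{-\beta Q}-e^{-\beta C})m(B_{2r}(x))$. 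The potentially enormous $e^{-\beta C}m(V)$ terms cancel exactly, leaving $(e^{-\beta Q}-e^{-\beta C})\,m(B_{2r}(x))\leq m(B_r(x))$; the choice $C=\gamma n$ with $\gamma=\frac{2\alpha nR}{r}\sqrt{D/q_{\min}}$ then makes $Q-C\leq -2$, so the prefactor is a uniform constant. With your uncapped initial datum this cancellation is unavailable, and you are forced into exactly the global tail estimate you cannot supply.
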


\begin{proof}
Let $r\cdot \sqrt{\frac{q_{\min}}{D}} \geq C > 0$, let $x\in V$ and let $u_t \in C(V)$ be given by $\partial_t u_t = \Delta u_t + \Gamma u_t$ and

$$u_0:=-\frac{C}{r}d(x,\cdot) \vee (-C).$$
We remark $\Gamma u_0 \leq \frac {q_{\min}}2$ so we can apply the theory established in this paper.
 We have 
$
e^{\alpha u_0} \geq \alpha u_0 + 1
$ 
and thus, by Theorem~\ref{thm:SemigroupCompare}, we have
\begin{align}\label{eq:LtPtExp}
e^{\alpha u_t} \geq P_t{e^{\alpha u_0}} \geq   \alpha P_t u_0 + 1
\end{align}
for $\alpha = 0.76$.
We know
$
\Gamma u_0 \leq \frac{DC^2}{2r^2}.
$
Thus, by \cite[Theorem~3.1]{lin2015equivalent},
\[
(\Delta P_t u_0)^2 \leq 
\frac{n}{2t} (P_t \Gamma u_0 - \Gamma P_t u_0)
\leq 
\frac{n}{2t}\|\Gamma u_0\|_\infty \leq \frac{nDC^2}{4tr^2}
\]
giving
$
|P_t u_0 - u_0| \leq \frac{C\sqrt{nDt}}r
$
and hence
$
P_t u_0(x) \geq  - \frac{C\sqrt{nDt}}r.
$
Combining with \eqref{eq:LtPtExp}, we obtain

\[
u_t(x) \geq \frac 1 \alpha \log \left(1 - \alpha\frac{C\sqrt{nDt}}r\right). 
\]
For $d(x,y) \leq R:=2r$ and $t<T$, we have by Theorem~\ref{thm:Harnack},
\[
u_T(y) - u_t (x) \geq -\frac n 2 \log\left(\frac{T}{t}\right) - \frac{2R^2}{q_{\min} (T-t)}
\]
We now aim to find $t$ and $T$ giving the optimal estimate for $u_T(y)$.
We set
\[
\sqrt{t}:= \frac{r}{\alpha C \sqrt{nD}} \cdot \frac{{\alpha n}}{\alpha n + 1}
\]
and get
\begin{align*}
\frac 1 \alpha \log \left(1 - \alpha\frac{C\sqrt{nDt}}r\right) + \frac n 2 \log{t} &= -\frac{1}\alpha \log(\alpha n + 1) + n \log\left( \frac{r}{\alpha C\sqrt{nD}} \right) - n\log\left(\frac{\alpha n+1}{\alpha n } \right) \\
&\geq -n-\frac 1 \alpha  +    n \log\left( \frac{r}{\alpha C\sqrt{nD}} \right)
\end{align*}

When setting $T:=\frac{4R^2}{n q_{\min}}$ and choosing $C$ s.t. $T/2 \geq t$, we get
\[
-\frac n 2 \log T - \frac{2R^2}{q_{\min} (T-t)} \geq -n \log \left( \frac{2R}{\sqrt{nq_{\min}}} \right) - n.
\]
Putting together gives
\[
- u_T(y) \leq 2n + \frac 1 \alpha + n \log\left( \frac{2\alpha C R}{r} \sqrt{\frac D{q_{\min}}} \right) =: Q.
\]
By Theorem~\ref{thm:ell1Compare} for $\beta = \log 3$ we have
\begin{align}\label{eq:ebetaLup}
\|e^{\beta u_T}\|_1 \leq \|e^{\beta u_0}\|_1 \leq \|e^{-\beta C} + (1-e^{-\beta C})1_{B_{r}(x)}\|_1 \leq e^{-\beta C}m(V) +  m(B_r(x)).
\end{align}
On the other hand,
\[
e^{\beta u_T} \geq e^{-\beta C} + (e^{-\beta Q}-e^{-\beta C}) 1_{B_R(x)}
\]
and thus,
\begin{align}\label{eq:ebetaLdown}
\|e^{\beta u_T}\|_1 \geq   e^{-\beta C}m(V)  +  (e^{-\beta Q}-e^{-\beta C})  m(B_R(x))
\end{align}

We set $C:= \gamma n$ with
\[
\gamma:=\frac{2\alpha n R}{r} \sqrt{\frac{D}{q_{\min}}}
\]
and get
\begin{align*}
Q-C  &= 2  n +  \frac 1 {\alpha} + 2n\log \left(\gamma \right)   -    \gamma n \\
&\leq n(3 + 2\log \gamma - \gamma)
\end{align*}
We remark that the assumption $r \geq 4n^2 \sqrt{\frac{D}{q_{\min}}}$ ensures $\Gamma u_0 \leq \frac{q_{\min}} 2$ with our choice of $C$.

Since $\alpha=0.76$ and $R/r=2$ and $n \geq 2$ and $D/q_{\min} \geq 2$ we get $\gamma \geq 8.5$ and $n(3 + 2\log \gamma - \gamma) \leq - 2$.
Thus by combining \eqref{eq:ebetaLup} and \eqref{eq:ebetaLdown},
\[
\frac{m(B_r(x))}{m(B_R(x))} \geq
e^{-\beta Q} - e^{-\beta C} = e^{-\beta Q} \cdot \left(1 - e^{\beta(Q-C)} \right) \geq e^{-\beta Q}(1-e^{-2\beta}) \geq 0.86 e^{-\beta Q}.
\]
Taking inverse gives
\begin{align*}
\frac{m(B_R(x))}{m(B_r(x))} &\leq 1.17 e^{2n\beta+\frac {\beta}{\alpha}} \left({\frac{2\alpha n R}{r}  \sqrt{\frac {D}{q_{\min}}}}\right)^{2\beta n}
\\&
\leq 1.17 \cdot 4.26   \left({\frac{4.14 n R}{r}  \sqrt{\frac {D}{q_{\min}}}}\right)^{2\beta n}
\\
&\leq  \left( 9n \sqrt{\frac {D}{q_{\min}}}\right)^{3 n}
\end{align*}
which finishes the proof.
\end{proof}

\begin{remark}
Although the theorem only states the volume doubling for large radii, we get an a priori volume doubling constant for small radii in terms of the parameters $D$, $q_{\min}$ and the maximal radius. Particularly, the global volume doubling constant can be upper bounded only in terms of the dimension $n$, the maximal vertex degree $D$ and the minimum positive jump weight $q_{\min}$.
\end{remark}



\subsection{Fast volume growth for small radii}

One might hope for a volume doubling constant only depending on the dimension and being valid also for small radii. This however does not hold true as the following example demonstrates.

\begin{example}
Let $\eps>0$ and $G_\eps=(V,q)$ with $V=\{1,2,3\}$ and
\begin{align*}
q(1,2)&=\eps \\
q(2,3)&=1 \\
q(2,1)&=4 \\
q(3,2)&=4 \\
q(1,3)=q(3,1)&=0.
\end{align*}
It can be verified e.g. via \cite[Proposition~2.1]{hua2017ricci} that $G_\eps$ satisfies $CD(\frac 1 4,32)$ independently of the choice of $\eps$.
However, the reversible measure $m$ satisfies
\[
\frac{m(1)}{m(2)}=\frac{q(2,1)}{q(1,2)} = \frac{4}{\eps}
\]
and similarly, $\frac{m(2)}{m(3)}=4$
showing that
 \[
\frac{m(B_2(3))}{m(B_1(3))} \longrightarrow \infty \qquad \mbox{ as } \eps \to 0
 \]
although the dimension and curvature in the $CD$ condition stay same. This shows that a uniform volume doubling with a doubling constant only depending on the dimension cannot hold true, not even in case of positive curvature.
\end{example}

\subsection{No Expander graphs with $CD(0,n)$}

An expander graph family is a growing family of combinatorial undirected graphs $(G_i)$ of constant degree $D$ s.t. the first positive eigenvalue of $-\Delta_{G_i}$ is uniformly lower bounded by some $\lambda >0$.
It is conjectured in \cite{cushing2016bakry} that there is no expander graph family satisfying $CD(0,\infty)$. Using the volume doubling property, we can treat the case of finite dimension.

\begin{corollary}\label{cor:Expanders}
Let $n<\infty$. Then
there is no expander graph family satisfying $CD(0,n)$.
\end{corollary}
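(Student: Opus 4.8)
The plan is to show that an expander family satisfying $CD(0,n)$ would have to display two incompatible volume growth behaviours: the volume doubling of Theorem~\ref{thm:volDoubling} forces polynomial growth of balls, while the uniform spectral gap forces exponential growth of balls up to half the total mass. These can coexist only if the vertex sets stay bounded, contradicting that the family grows. So I argue by contradiction, assuming a growing family $(G_i)$ of constant degree $D$ with uniform spectral gap $\lambda>0$ and $CD(0,n)$.

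First I would fix the combinatorial normalization. For a $D$-regular undirected graph one takes $q(x,y)=1$ whenever $x\sim y$, so that $q_{\min}=1$, the counting measure $m\equiv 1$ is reversible, and $m(B_r(x))=|B_r(x)|$. Each $G_i$ is such a graph, so Theorem~\ref{thm:volDoubling} applies with the \emph{same} parameters $n$ and $D$ throughout the family: writing $r_0:=4n^2D$ and $C:=(9n\sqrt{D})^{3n}$, we get $|B_{2r}(x)|\leq C\,|B_r(x)|$ for every $r\geq r_0$. Iterating this along the dyadic scale $r_0,2r_0,4r_0,\dots$ and using the crude bound $|B_{r_0}(x)|\leq D^{\,r_0+1}$ (which depends only on $n,D$) yields a global polynomial bound
\[
|B_R(x)|\leq C'\,R^{\,d},\qquad R\geq 1,
\]
where $d:=\log_2 C$ and $C'$ depend only on $n$ and $D$ (the small-radius range $R<r_0$ is absorbed by enlarging $C'$).

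Next I would convert the spectral gap into exponential ball growth. By the easy direction of the discrete Cheeger inequality, the uniform lower bound on the first positive eigenvalue of $-\Delta_{G_i}$ yields a uniform lower bound $h_i\geq h_0>0$ on the edge-isoperimetric constants of the $G_i$. Whenever $|B_r(x)|\leq|V_i|/2$, every edge leaving $B_r(x)$ lands in $B_{r+1}(x)\setminus B_r(x)$, and each vertex there is incident to at most $D$ edges, so
\[
|B_{r+1}(x)|\geq\Big(1+\tfrac{h_0}{D}\Big)\,|B_r(x)|.
\]
Writing $\kappa:=1+h_0/D>1$, induction from $|B_0(x)|=1$ gives $|B_r(x)|\geq\kappa^{\,r}$ for every $r$ not exceeding the first radius $r^\ast$ at which $B_{r^\ast}(x)$ contains more than half the vertices.

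Finally I would compare the two rates. For $r\leq r^\ast$ both estimates apply, so $\kappa^{\,r}\leq|B_r(x)|\leq C'\,r^{\,d}$. Since $\kappa>1$, $d$ and $C'$ are all uniform over the family, this exponential-versus-polynomial inequality forces $r^\ast\leq R_1$ for a constant $R_1=R_1(n,D,\lambda)$. Hence $|V_i|<2\,|B_{r^\ast}(x)|\leq 2C'\,R_1^{\,d}$ is bounded independently of $i$, contradicting $|V_i|\to\infty$ and proving the corollary. I expect the only delicate point to be the bookkeeping of constants, namely checking that $r_0,C,C',h_0,\kappa,R_1$ are genuinely uniform across the family so that the final bound on $|V_i|$ is independent of $i$; the Cheeger step and the polynomial-versus-exponential comparison are otherwise standard.
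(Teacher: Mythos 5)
Your proposal is correct and takes essentially the same route as the paper: a contradiction between the polynomial ball growth forced by Theorem~\ref{thm:volDoubling} and the exponential ball growth (up to half the total mass) forced by the uniform spectral gap. The only difference is that the paper simply cites \cite[Proposition~2.5]{peled2013lipschitz} for the bound $|B_r(x)| \geq \min\left[\tfrac{|V_i|}{2}, C^r\right]$, whereas you rederive this expansion estimate from the easy direction of Cheeger's inequality and fill in the dyadic iteration yielding the polynomial bound, making the argument self-contained.
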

\begin{proof}
Let Suppose $(G_i)_i$ is a graph expander family satisfying $CD(0,n)$. 
By \cite[Proposition~2.5]{peled2013lipschitz}, we have
\[
|B_r(x)| \geq   \min \left[ \frac{|V_i|}2 ,  C ^{r} \right]
\]
where $C>1$ only depends on the spectral gap and the degree. In particular, balls have exponential volume growth.
This contradicts the polynomial volume growth of balls following from Theorem~\ref{thm:volDoubling}.
This finishes the proof.
\end{proof}

The question about expanders satisfying $CD(0,\infty)$ seems not to be tackleable with purely analytic methods as the birth death chain with constant jump rates to one side and constant but different jump rates to the other side has all analytical properties one expects from expanders, and it satisfies $CD(0,\infty)$.

\printbibliography

\end{document}